\newtheorem{theorem}{\bf Theorem}
\newtheorem{lemma}[theorem]{\bf Lemma}
\newtheorem{proposition}[theorem]{\bf Proposition}
\newtheorem{remark}[theorem]{\bf Remark}
\newtheorem{example}[theorem]{\bf Example}
\newtheorem{corollary}[theorem]{\bf Corollary}
\newcommand{\cis}[1][C]{\ensuremath{\mathbb{#1}}}
\newcommand{\av}[1]{\ensuremath{\mathcal{#1}}}
\newcommand{\f}[1]{\mathbf{#1}}
\newcommand{\euR}[1]{\ensuremath{\mathbb{E}^{#1}_{\cis[R]}}}
\newcommand{\C}{\mathbb{C}}
\newcommand{\R}{\mathbb{R}}
\newcommand{\N}{\mathbb{N}}
\newcommand{\diff}{\mathrm{d}}
\newcommand{\re}{\mathrm{Re}\,}
\newcommand{\I}{\mathrm{i}}
\newcommand{\e}{\mathrm{e}}
\newcommand{\id}{\mathrm{id}}
\newcommand{\SO}[1]{\ensuremath{\mathbf{SO}({\R,#1}})}
\newcommand{\OO}[1]{\ensuremath{\mathbf{O}({\R,#1}})}
\newcommand{\iso}[1]{\mathbf{Iso}_{#1}}
\newcommand{\sym}{\mathbf{Symm}}
\newcommand{\simm}{\mathbf{Sim}}
\title[Symmetries of plane curves]{Symmetries and similarities of planar algebraic curves using harmonic polynomials}
\author[J. G. Alc\'{a}zar]{Juan Gerardo Alc\'azar }
\author[M. L\'{a}vi\v{c}ka]{Miroslav L\'{a}vi\v{c}ka}
\author[J. Vr\v{s}ek]{Jan Vr\v{s}ek}
\address[J. Alca\'{a}zar]{Universidad de Alcal\'{a}}
\email{juange.alcazar@uah.es}
\address[M. L\'{a}vi\v{c}ka, J. Vr\v{s}ek]{University of West Bohemia, Pilsen}
\email{\{lavicka, vrsekjan\}@kma.zcu.cz}
\begin{document}

\maketitle


\begin{abstract}
We present novel, deterministic, efficient algorithms to compute the symmetries of a planar algebraic curve, implicitly defined, and to check whether or not two given implicit planar algebraic curves are similar, i.e. equal up to a similarity transformation. Both algorithms are based on the fact, well-known in Harmonic Analysis, that the Laplacian operator commutes with orthogonal transformations, and on efficient algorithms to find the symmetries$/$similarities of a harmonic algebraic curve$/$two given harmonic algebraic curves. In fact, we show that in general the problem can be reduced to the harmonic case, except for some special cases, easy to treat.
\end{abstract}

\section{Introduction.} \label{intro}

This paper addresses, first, the problem of deterministically computing the symmetries of a given planar algebraic curve, implicitly defined, and second, the problem of deterministically checking whether or not two implicitly given, planar algebraic curves are similar, i.e. equal up to a similarity transformation. Both problems have been addressed in many papers coming from applied fields like Computer Aided Geometric Design, Pattern Recognition or Computer Vision; the interested reader can check the bibliographies of the papers \cite{A13, ADTH, AHM14b} for an exhaustive list. In fields like Patter Recognition or Computer Vision the problem of detecting similarity is essential because objects must be recognized regardless of their position and scale. In Computer Aided Geometric Design, symmetry is important on its own right, since it is a distinguished feature of the shape of an object. But it is also important in terms of storing or managing images, because knowing the symmetries of an image allows the machine to reconstruct the object at a lower computational or memory cost.

In applied fields, the problems treated in this paper are considered mostly for curves defined in a floating point representation. This makes perfect sense in many applications, since the considered curves are often approximate models for real objects. However, here we assume to be dealing with exact curves, implicitly defined by polynomials with coefficients in some real computable field. Recent, or relatively recent, publications also treating the exact case, without considering floating point coefficients, are \cite{ADTH, AHM14, AHM14b, HJ17, L09, LR08}. The papers \cite{AHM14,LR08} address symmetries of planar algebraic curves; in \cite{AHM14} the curve is assumed to be defined by a rational parametrization, while in \cite{LR08} the input is defined by means of an implicit equation. Similarities of planar algebraic curves are considered in \cite{AHM14b}, where the input is a pair of rational parametrizations, \cite{ADTH}, where the input is a pair of implicit equations, as in our case, and \cite{HJ17}, where the input is also a pair of rational parametrizations. Nevertheless, the problem addressed in \cite{HJ17} is more general, since the authors consider how to deterministically recognize whether two rational curves of any dimension are related by some, non-necessarily orthogonal, projective or affine transformation.

The algorithms presented in this paper to solve both the symmetry and the similarity problem lie on the fact that the Laplacian operator commutes with orthogonal transformations.
Therefore, symmetry$/$similarity are preserved when we take laplacians. Furthermore, the laplacian of a polynomial is another polynomial whose degree is lowered by two with respect to the degree of the original polynomial. Thus, by repeteadly taking laplacians we come down to either harmonic polynomials, i.e. polynomials whose laplacian is identically zero, or conics, or lines. When we end up with harmonic polynomials we will see that the problem boils down to computing with univariate, complex polynomials. When we end up with conics or lines the problem, except in some special cases, can be reduced to the harmonic case. This means that we can replace our input by a new input consisting only of harmonic polynomials, therefore reducing the computation to the case before. In some special cases this is not enough; however, in those cases we end up either with a conic or with the product of a line and a cirle, so the problem can be solved by elementary, Linear Algebra methods. 

The similarity problem is certainly more general than the symmetry problem, since determining the symmetries of an algebraic curve can be reduced to finding the self-similarities of the curve (see Proposition 2 in \cite{AHM14b}). However, we believe that for sake of clarity it is preferable to address first the problem for symmetries, and move afterwards to the case of similarities. In both cases we start analyzing in detail the problem for harmonic polynomials. Then we show how to reduce the general case to the harmonic case. Additionally, we develop some results on symmetries of harmonic polynomials that can be interesting on its own; in particular, we provide closed forms for the tentative axes of symmetry or the tentative center of symmetry (except in some of the special cases mentioned above) of an algebraic curve defined by a harmonic polynomial.  

Let us also compare our results with the results in the papers mentioned in the preceding paragraphs of this Introduction; we restrict to the papers where implicitly defined curves are considered. Regarding the detection of rotational symmetry, our results are not necessarily more efficient than those in \cite{LR08}, although our method is, in our opinion, clearer and easier to implement. The same comment can be made on detecting reflectional symmetry, not treated in \cite{LR08}, but addressed in the Ph. D. Thesis \cite{L09} of one of the authors. Regarding similarity detection, in \cite{ADTH} the problem is reduced to solving a bivariate polynomial system with the extra advantage of containing a univariate equation in the generic case, which makes the computation efficient. In fact, \cite{ADTH} contains an extensive analysis of examples and timings, showing an overall good performance even for serious inputs. Our algorithm has advantages and disadvantages, compared to the algorithm in \cite{ADTH}. The main advantage is computational efficiency: in our case, we completely avoid bivariate polynomial solving, and we can manage serious inputs (see Remark \ref{efficiency-5} and Remark \ref{rem-efficiency-again} in Section \ref{sec-similarities}) in less than 1 second, which makes our algorithm much faster than the algorithm in \cite{ADTH}. We can mention, however, two disadvantages: first, the algorithm in \cite{ADTH} can be easily adapted to the case of floating-point coefficients, while our algorithm is worse suited for that. Secondly, the algorithm in \cite{ADTH} can be generalized to detect whether or not two given algebraic curves are affinely equivalent, i.e. related by an affine, non-necessarily orthogonal, transformation. This, however, is not so clear for our algorithm, since laplacians are not preserved by non-orthogonal transformations. 

The structure of the paper is the following. Section \ref{sec-generalities} provides some general results, including some background on laplacians, harmonic polynomials, symmetries and similarities, to be used later in the paper. Symmetry computation is addressed in Section \ref{sec-symmetries}, first for harmonic polynomials, then for the general case. The same structure is used in Section \ref{sec-similarities} in order to address similarity detection. Our conclusions and some lines for further research are presented in Section \ref{sec-conclusion}.

\section{Symmetries and similarities of algebraic curves, and Laplace operator.}\label{sec-generalities}

\subsection{Symmetries.} \label{subsec-sym}

An~\emph{isometry} of the Euclidean plane $\euR{2}$ is a~mapping $\phi: \euR{2}\rightarrow\euR{2}$ which preserves distances. It can be always written in the form
\begin{equation}\label{eq isometry}
  X\mapsto A\cdot X+\overrightarrow{b},
\end{equation}
where $A\in\OO{2}$ is an orthogonal matrix and $\overrightarrow{b}\in\R^2$. Hence the group of  isometries, denoted by $\iso{2}$ is isomorphic to the~semidirect product $\OO{2}\rtimes\R^2$. The isometries preserving also the orientation of the plane are called \emph{direct} isometries. They form a subgroup $\iso{2}^+\cong\SO{2}\rtimes\R^2$ of $\iso{2}$. Isometries reversing orientation are called \emph{opposite}; the set of opposite isometries is denoted $\iso{2}^-$. Isometries can be defined in $\euR{n}$ in an analogous way, for $n\geq 3$; we represent isometries in $\euR{n}$ by $\iso{n}$.

A planar algebraic curve $\av{C}$ is the zeroset of a polynomial $f(x,y)$. We will assume that $f$ has real coefficients and is square-free, i.e. that $f$ has no multiple factors. {Additionally, even though commonly our input will be a curve $\av{C}$ with infinitely many real points, i.e. a \emph{real} curve, at several points we will be considering $\av{C}$ as a subset of ${\Bbb C}^2$.} Let $\sym(\av{C})$ denote the set of symmetries of the curve, i.e,
\begin{equation}
  \sym(\av{C}):=\{\phi\in\iso{2}\mid\,\phi(\av{C})=\av{C}\},
\end{equation}
which is a subgroup of $\iso{2}$. Also let $\sym^+(\av X)$  be the subgroup of direct  symmetries of the curve. {Now while the notion of symmetry is of {\it geometric} nature, we want to translate this notion into a notion of {\it algebraic} nature. In order to do this,} we define the set $\sym(f)$ of symmetries of the \emph{polynomial} $f$ defining $\av{C}$ as
\begin{equation}
  \sym(f):=\{\phi\in\iso{2}\mid\,f\circ \phi=\lambda  f\},
\end{equation}
where $\lambda\neq 0$ is a constant. Then we have the following result.

\begin{lemma} \label{lem-recognize}
If $f$ is square-free, then $\sym(\av{C})=\sym(f)$.
\end{lemma}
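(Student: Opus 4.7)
The plan is to prove the two inclusions $\sym(f)\subseteq\sym(\av{C})$ and $\sym(\av{C})\subseteq\sym(f)$ separately. The first inclusion is purely formal: if $f\circ\phi=\lambda f$ with $\lambda\neq 0$, then for every point $P$ we have $f(P)=0$ iff $f(\phi(P))=0$, so $\phi(\av{C})=\av{C}$, and thus $\phi\in\sym(\av{C})$.

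The substance of the lemma lies in the reverse inclusion, where the hypothesis $\phi(\av{C})=\av{C}$ is geometric and the desired conclusion $f\circ\phi=\lambda f$ is an algebraic identity of polynomials. The natural bridge between the two is Hilbert's Nullstellensatz, which requires working over an algebraically closed field. I would therefore interpret $\av{C}$ throughout the argument as the complex zero locus of $f$ in $\C^2$, following the convention already announced in the paragraph preceding the lemma. Given $\phi\in\sym(\av{C})$, the isometry $\phi$ extends to an affine automorphism of $\C^2$ and hence induces a ring automorphism $g\mapsto g\circ\phi$ of $\C[x,y]$; in particular $f\circ\phi$ is again square-free precisely because $f$ is. Now both $f$ and $f\circ\phi$ are square-free polynomials sharing the same complex zero locus $\av{C}$, so by the Nullstellensatz they each generate the vanishing ideal $I(\av{C})$ in $\C[x,y]$. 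The two principal ideals $(f)$ and $(f\circ\phi)$ thus coincide, and since the units of $\C[x,y]$ are the nonzero constants, one obtains $f\circ\phi=\lambda f$ for some $\lambda\neq 0$.

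I expect the main, though quite mild, obstacle to be the real versus complex bookkeeping: both $\sym(\av{C})$ and $\sym(f)$ are defined in terms of real isometries acting on $\euR{2}$ and on real polynomials, whereas the Nullstellensatz step lives naturally over $\C$. This is easily resolved because an isometry has the form $X\mapsto AX+\overrightarrow{b}$ with real data and therefore extends canonically to $\C^2$, while the constant $\lambda$ produced at the end is automatically real, being the ratio of corresponding coefficients of two real polynomials. Once these conventions are fixed, the heart of the argument is a single, essentially one-line application of the Nullstellensatz, and the square-free hypothesis is used precisely to ensure that the vanishing ideal is principally generated by $f$ (and by $f\circ\phi$) rather than by some proper divisor.
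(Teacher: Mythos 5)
Your proof is correct and follows essentially the same route as the paper: the easy inclusion $\sym(f)\subseteq\sym(\av{C})$ is argued identically, and the reverse inclusion rests on the same key fact that two square-free polynomials with the same zero locus in $\C^2$ must be constant multiples of one another. The only difference is presentational: you make the Nullstellensatz and the principality of the vanishing ideal explicit, whereas the paper compresses this into the observation that $f$ and $f\circ\phi$ define the same variety and have the same degree.
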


\begin{proof} Let us see $\sym(\av{C}) \subset \sym(f)$, first. By definition, $\phi\in \sym(\av{C})$ iff ${\bf x}\in \av{C}\Leftrightarrow \phi({\bf x})\in \av{C}$. Therefore, if $\phi\in \sym(\av{C})$ then $f$ and $f\circ \phi$ define the same variety. Since $f$ and $f\circ \phi$ have the same degree, it follows that $f\circ \phi=\lambda  f$, with $\lambda$ a nonzero constant. Now let us see $\sym(f) \subset \sym(\av{C})$. Indeed, let $\phi\in \sym(f)$. Since $f\circ \phi=\lambda f$, we have that $f(\phi({\bf x}))=0$ iff $f({\bf x})=0$, and hence $\phi(\av{C})=\av{C}$.
\end{proof}

\begin{remark}  Lemma~\ref{lem-recognize} {requires ${\mathcal C}$ to be considered as a subset of ${\Bbb C}^2$; in fact, Lemma~\ref{lem-recognize}} is not necessarily true for $\av{C}_{{\Bbb R}}=\av{C}\cap {\Bbb R}^2$. For example, no real point satisfies an~equation $f(x,y):=x^2+y^2+1=0$. Thus the set of symmetries of $\av{C}_{{\Bbb R}}$ in this case form the full group $\iso{2}$ whereas $\sym(f)=\OO{2}$ only. However, if all the real factors of $f$ define real curves, then Lemma \ref{lem-recognize} is also true for $\av{C}_{{\Bbb R}}$. Observe as well that any symmetry of $\av{C}$ is in particular a symmetry of $\av{C}_{{\Bbb R}}$.
\end{remark}

\begin{remark} The hypothesis on $f$ being square-free is necessary. Indeed, if $f$ is not square-free, one can show that $f$ and $f\circ \phi$ have the same irreducible factors, so the zero--set of $f$ and $f\circ \phi$ is certainly the same. But the multiplicities may be exchanged \cite{Goldman-personal-com}. For instance, $x^2y^3=0$ defines an algebraic curve, symmetric with respect to the line $y=x$; however, when we apply the symmetry $\phi(x,y)=(y,x)$ to the curve, we get the curve $y^2x^3=0$. The zerosets of both curves $x^2y^3=0$ and $y^2x^3=0$ certainly coincide, but the defining polynomials are not symmetric, because the multiplicities of the irreducible factors do not match.
\end{remark}

\begin{remark} \label{high-form} Let $\phi(X)=AX+\overrightarrow{b}$ be a symmetry of the polynomial $f$, and let $\psi(X)=AX$. Writing $f=f_N+f_{N-1}+\cdots +f_0$, where $f_i$ denotes the homogeneous form of $f$ of degree $i$ and $N=\mbox{deg}(f)$, the condition $f\circ \phi=\lambda f$ implies $f_N\circ \psi=\lambda f_N$, i.e. $\psi$ is a symmetry of $f_N$.
\end{remark}

As we will see later, we will be interested in symmetries $\phi$ satisfying that $\phi^k=\id$, where $\id$ represents the identity, and $k\in\N$; this is the case, for instance, of reflections or rotations by angle $2\pi/k$. In this situation, one can show that the coefficient $\lambda$ in \eqref{eq:test of symetry} cannot attain arbitrary values.

\begin{lemma} \label{lem:lambda}
Let $\phi$ be a symmetry satisfying that $\phi^k=\id$ for some $k\in\N$. Then either $f\circ\phi=f$ or $f\circ\phi=-f$. Moreover, if $k$ is odd then $f\circ \phi=f$.
\end{lemma}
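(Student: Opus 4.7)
The plan is to iterate the defining relation $f\circ\phi=\lambda f$ and use $\phi^k=\id$ to pin down $\lambda$ to a real $k$-th root of unity.

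First, I would establish by induction on $j\geq 1$ that $f\circ\phi^j=\lambda^j f$. The base case is the hypothesis; the inductive step uses
\[
f\circ\phi^{j+1}=(f\circ\phi^j)\circ\phi=\lambda^j(f\circ\phi)=\lambda^{j+1}f.
\]
Setting $j=k$ and applying $\phi^k=\id$ gives $f=\lambda^k f$, and since $f\not\equiv 0$, we conclude $\lambda^k=1$.

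Next, I would argue that $\lambda$ is real. Both $f$ and $f\circ\phi$ have real coefficients: $f$ by the standing assumption, and $f\circ\phi$ because $\phi(X)=AX+\vec b$ has real entries. Picking any monomial of $f$ whose coefficient is nonzero and comparing the corresponding coefficients on the two sides of $f\circ\phi=\lambda f$ forces $\lambda\in\R$. Combined with $\lambda^k=1$, the only possibilities are $\lambda=1$, together with $\lambda=-1$ in case $k$ is even. When $k$ is odd, $(-1)^k=-1\neq 1$ rules out the value $\lambda=-1$, leaving $\lambda=1$, i.e. $f\circ\phi=f$.

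There is no serious obstacle here; the only thing to be careful about is the reality of $\lambda$, which is immediate from the real-coefficient comparison. The argument does not require $\phi$ to be orthogonal in any stronger sense than being a real affine map, so it applies to the full group $\iso{2}$ as stated.
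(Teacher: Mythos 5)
Your proof is correct and follows essentially the same route as the paper: iterate $f\circ\phi=\lambda f$ to get $\lambda^k=1$, then use the reality of $\lambda$ to conclude $\lambda=\pm1$, with $\lambda=1$ forced when $k$ is odd. Your explicit coefficient-comparison justification that $\lambda\in\R$ is a small welcome addition that the paper leaves implicit.
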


\begin{proof}
For $k=2$, $f\circ \phi^2=(f\circ\phi)\circ\phi=(\lambda f)\circ\phi=\lambda^2 f$. Using induction on $k$, we arrive at $\lambda^k=1$. Since $\lambda$ is a real number, when $k$ is even we get $\lambda=\pm 1$; when $k$ is odd, necessarily $\lambda=1$.
\end{proof}

\subsection{Similarities.} \label{subsec-simil}

A~\emph{similarity} of the Euclidean plane $\euR{2}$ is a~mapping $\phi: \euR{2}\rightarrow\euR{2}$ which preserves ratios of distances. It can always be written in the form
\begin{equation}\label{eq similarity}
  X\mapsto \mu A\cdot X+\overrightarrow{b},
\end{equation}
where $\mu\in {\Bbb R}-\{0\}$, $A\in\OO{2}$ is an orthogonal matrix and $\overrightarrow{b}\in\R^2$; we call $\mu$ the \emph{scaling factor} of the similarity. Therefore, any similarity is the composition of a scaling (i.e. a homothety) and an isometry. Again planar similarities form a group $\simm_2$, and we can distinguish similarities preserving the orientation, or \emph{direct}, and similarities reversing the orientation, or \emph{opposite}. Additionally, similarities can be defined in $\euR{n}$ in an analogous way, for $n\geq 3$; we represent similarities in $\euR{n}$ by $\simm_n$.

Given two algebraic curves $\av{C}_1,\av{C}_2$ implicitly defined by two square-free polynomials $f_1,f_2$, and considering, as in the previous subsection, $\av{C}_i\subset {\Bbb C}^2$ for $i=1,2$, we define:
\begin{equation}
  \simm(\av{C}_1,\av{C}_2):=\{\phi\in\simm_2\mid\,\phi(\av{C}_1)=\av{C}_2\},
\end{equation}
and
\begin{equation}
  \simm(f_1,f_2):=\{\phi\in\simm_2\mid\,f_1\circ \phi=\lambda  f_2\},
\end{equation}
where $\lambda\neq 0$ is a constant. Then we have the following result, which can be proven as Lemma \ref{lem-recognize}.

\begin{lemma} \label{lem-recognize-2}
If $f_1,f_2$ are square-free, then $\simm(\av{C}_1,\av{C}_2)=\simm(f_1,f_2)$.
\end{lemma}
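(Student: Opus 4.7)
The plan is to mirror the proof of Lemma~\ref{lem-recognize} almost verbatim, replacing the single curve $\av{C}$ by the pair $\av{C}_1,\av{C}_2$ and the single polynomial $f$ by the pair $f_1,f_2$. I would establish the equality by proving the two inclusions separately, and in each direction the argument is essentially identical to its counterpart for symmetries.

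For the inclusion $\simm(\av{C}_1,\av{C}_2)\subset\simm(f_1,f_2)$, I would start from a similarity $\phi$ with $\phi(\av{C}_1)=\av{C}_2$, viewed inside $\C^2$, and translate this set equality into a polynomial identity. Two ingredients are needed. First, a similarity $\phi$ is an invertible affine map, so composition with $\phi$ is a degree-preserving $\C$-algebra automorphism of $\C[x,y]$ and in particular preserves square-freeness. Second, since $\av{C}_1$ and $\av{C}_2$ are related by a similarity, $\deg f_1=\deg f_2$. From the set equality $\phi(\av{C}_1)=\av{C}_2$, the polynomials $f_1\circ\phi$ and $f_2$ define the same variety in $\C^2$, and being square-free of the same degree they must be constant multiples of each other. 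This yields $f_1\circ\phi=\lambda f_2$ with $\lambda\neq 0$, hence $\phi\in\simm(f_1,f_2)$.

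For the reverse inclusion $\simm(f_1,f_2)\subset\simm(\av{C}_1,\av{C}_2)$, the argument is a direct pointwise substitution: if $f_1\circ\phi=\lambda f_2$ with $\lambda\neq 0$, then for every $\vek[x]\in\C^2$ the equivalence $f_2(\vek[x])=0\Leftrightarrow f_1(\phi(\vek[x]))=0$ holds. Together with the bijectivity of $\phi$, this translates set-theoretically into $\phi(\av{C}_1)=\av{C}_2$, and therefore $\phi\in\simm(\av{C}_1,\av{C}_2)$.

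The main obstacle, already flagged in the remark following Lemma~\ref{lem-recognize}, is the need to work with $\av{C}_i\subset\C^2$ rather than over $\R^2$. Without complex points, ``same zero set'' need not entail ``same radical'', and the step passing from equality of varieties to a constant-multiple identity of polynomials could fail (as the example $x^2+y^2+1$ shows). Once this is granted, the proof is a routine extension of the one-polynomial case and requires no new ideas beyond those used in Lemma~\ref{lem-recognize}.
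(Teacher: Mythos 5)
Your proposal is correct and is essentially the paper's own argument: the paper disposes of Lemma~\ref{lem-recognize-2} by saying it ``can be proven as Lemma~\ref{lem-recognize}'', and your two-inclusion adaptation (equality of complex zero sets plus square-freeness and equal degrees gives proportionality of $f_1\circ\phi$ and $f_2$; pointwise substitution and bijectivity of $\phi$ give the converse), together with the correct insistence on working in $\C^2$, is exactly that proof. One minor point, inherited from the paper's definitions rather than introduced by you: $f_1\circ\phi$ vanishes on $\phi^{-1}(\av{C}_1)$, so the identity $f_1\circ\phi=\lambda f_2$ literally corresponds to $\phi(\av{C}_2)=\av{C}_1$ rather than $\phi(\av{C}_1)=\av{C}_2$; the stated equality of the two sets of similarities holds after the harmless replacement of $\phi$ by $\phi^{-1}$, exactly as in the paper.
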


Direct similarities can be identified with complex transformations $\phi(z)=az+b$, and opposite similarities with complex transformations $\phi(z)=\alpha\overline{z}+\beta$, where $\alpha,\beta\in {\Bbb C}$, $\alpha\neq 0$. Furthermore, if $\av{C}_1=\av{C}_2=\av{C}$, whenever $\av{C}$ is not the union of (possibly complex) concurrent lines, the self-similarities of $\av{C}$ are  isometries (see Prop. 2 in \cite{AHM14b}).

\subsection{Laplacian, harmonic functions, and the general strategy.}\label{subsec-laplacian}

Let $\R_k[x_1,\dots,x_n]$ denote the vector space of polynomials of degree at most $k>0$ in the variables $x_1,\ldots,x_n$; then the \emph{Laplace operator}, or \emph{Laplacian}, is defined as the linear mapping
\begin{equation}
   \triangle:\R_k[x_1,\dots,x_n]\rightarrow \R_{k-2}[x_1,\dots,x_n],\qquad \triangle f=\sum_{i=1}^n\frac{\partial^2\,f}{\partial\,x_i^2}
\end{equation}
The kernel of the Laplacian consists of the \emph{harmonic polynomials}, which are the polynomials satisfying $\triangle f=0$.

The key property of the Laplacian, in our context, is that the Laplacian commutes with orthogonal transformations (see Chapter 1 of \cite{ABR}). Therefore, for $\phi\in\iso{n}$ we have
\begin{equation}\label{eq:laplacian-symmetries}
\triangle(f\circ \phi)=\triangle f \circ \phi.
\end{equation}

\noindent The next lemma shows that the above equation must, in general, be slightly modified for $\phi\in \simm_n$.

\begin{lemma} \label{Laplacian-similarities}
Let $\phi\in \simm_n$, where $\phi$ is an in Eq. \eqref{eq similarity}, with $\mu$ the scaling factor. Then we have
\begin{equation}\label{eq:laplacian-similarities}
\triangle(f\circ \phi)=\mu^2\cdot (\triangle f \circ \phi).
\end{equation}
\end{lemma}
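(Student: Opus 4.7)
The plan is to factor the similarity as an isometry composed with a uniform scaling, and then prove the identity separately for each factor; the isometry case is already handled by equation \eqref{eq:laplacian-symmetries}, so the only genuine computation is for the scaling.

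More precisely, I would set $h_\mu(X) := \mu X$ and $\psi(Y) := AY + \overrightarrow{b}$, so that $\psi \in \iso{n}$ and $\phi = \psi \circ h_\mu$. Then $f\circ \phi = (f\circ\psi)\circ h_\mu$, and by the chain rule applied twice (the Jacobian of $h_\mu$ is $\mu\cdot I_n$), for any smooth function $g$ one finds
\begin{equation*}
  \frac{\partial^2 (g\circ h_\mu)}{\partial x_i^2}(X) = \mu^2 \cdot \frac{\partial^2 g}{\partial x_i^2}(\mu X),
\end{equation*}
so summing over $i$ yields $\triangle(g\circ h_\mu) = \mu^2 \cdot (\triangle g)\circ h_\mu$. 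Applying this to $g = f\circ\psi$ gives $\triangle(f\circ\phi) = \mu^2 \cdot \triangle(f\circ\psi) \circ h_\mu$.

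Now I invoke equation \eqref{eq:laplacian-symmetries}, which tells us that $\triangle(f\circ\psi) = (\triangle f)\circ\psi$ because $\psi$ is an isometry. Substituting this in,
\begin{equation*}
  \triangle(f\circ\phi) = \mu^2 \cdot \big((\triangle f)\circ\psi\big)\circ h_\mu = \mu^2 \cdot (\triangle f) \circ (\psi\circ h_\mu) = \mu^2\cdot (\triangle f)\circ \phi,
\end{equation*}
which is the claimed identity.

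There is no real obstacle here; the only subtle point is to be careful with the order of composition when distributing the chain rule across $\psi \circ h_\mu$, but since $h_\mu$ is the innermost map and contributes the $\mu^2$ factor while $\psi$ contributes nothing (orthogonality kills all cross terms, as already encoded in \eqref{eq:laplacian-symmetries}), the argument reduces to a one-line computation. Alternatively, one could prove the identity directly via the chain rule for $\phi$, using that the Jacobian $\mu A$ satisfies $(\mu A)^\top(\mu A) = \mu^2 I_n$, so that the Hessian of $f\circ\phi$ at $X$ equals $(\mu A)^\top \mathrm{Hess}(f)(\phi(X))(\mu A)$, whose trace is $\mu^2 \operatorname{tr}(\mathrm{Hess}(f)(\phi(X))) = \mu^2 (\triangle f)(\phi(X))$.
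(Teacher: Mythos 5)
Your proof is correct and follows essentially the same route as the paper: factor the similarity into an isometry and a uniform scaling, invoke Eq. \eqref{eq:laplacian-symmetries} for the isometric part, and use the Chain Rule to extract the factor $\mu^2$ from the scaling. You simply spell out the details (and add the Hessian-trace alternative) that the paper leaves as an easy check.
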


\begin{proof} By using the Chain Rule one can easily check that Eq. \eqref{eq:laplacian-similarities} holds for translations; therefore, from Eq. \eqref{eq:laplacian-symmetries} we deduce that Eq. \eqref{eq:laplacian-similarities} holds for isometries as well. Thus, it is enough to prove the result for similarities of the type $\phi(x,y)=(\mu x, \mu y)$. This follows, again, from the Chain Rule.
\end{proof}

Combining the previous properties with Lemma \ref{lem-recognize} and Lemma \ref{lem-recognize-2}, we have the following result.

\begin{theorem}\label{thm:symmetry preservation} The following statements hold:
\begin{itemize}
\item [(1)] For $f\in {\Bbb R}[x,y]$, $\sym(f)\subset\sym(\triangle f)$.
\item [(2)] For $f_1,f_2\in {\Bbb R}[x,y]$, $\simm(f_1,f_2)\subset \simm(\triangle f_1,\triangle f_2)$.
\end{itemize}
\end{theorem}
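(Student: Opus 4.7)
The plan is to prove both inclusions as direct, one-step consequences of the commutation relations already at hand: Eq.~\eqref{eq:laplacian-symmetries} for isometries and Lemma~\ref{Laplacian-similarities} for similarities, combined with the linearity of $\triangle$. No additional machinery should be required; the whole content of the theorem is essentially a reformulation of those commutation identities in the language of $\sym$ and $\simm$.

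For part (1), I would start from an arbitrary $\phi\in\sym(f)$, so that $\phi\in\iso{2}$ and $f\circ\phi=\lambda f$ for some nonzero constant $\lambda$. Applying $\triangle$ to both sides and using linearity on the right produces $\lambda\,\triangle f$, while applying Eq.~\eqref{eq:laplacian-symmetries} on the left (legal because $\phi$ is an isometry) produces $(\triangle f)\circ\phi$. Equating the two expressions gives $(\triangle f)\circ\phi=\lambda\,\triangle f$, which is precisely the defining relation for $\phi\in\sym(\triangle f)$.

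For part (2), the argument is formally identical but picks up a scaling factor. For $\phi\in\simm(f_1,f_2)$, written as in Eq.~\eqref{eq similarity} with scaling factor $\mu$, I would again apply $\triangle$ to the relation $f_1\circ\phi=\lambda f_2$. This time Lemma~\ref{Laplacian-similarities} produces an extra $\mu^2$ on the left, so after rearrangement I obtain $(\triangle f_1)\circ\phi=(\lambda/\mu^2)\,\triangle f_2$ with nonzero constant $\lambda/\mu^2$, which is exactly the condition for $\phi\in\simm(\triangle f_1,\triangle f_2)$.

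There is essentially no obstacle here, which is why the statement is a theorem rather than a lemma only by convention of exposition. The only subtlety worth acknowledging is the degenerate case in which $\triangle f$ (resp.\ $\triangle f_1$ or $\triangle f_2$) vanishes identically, i.e., $f$ (resp.\ $f_i$) is itself harmonic; but then the defining relation for $\sym(\triangle f)$ or $\simm(\triangle f_1,\triangle f_2)$ holds trivially for every $\phi$ and every nonzero constant, so the inclusion is automatic and requires no separate treatment.
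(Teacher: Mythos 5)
Your proof is correct and follows exactly the route the paper intends: the paper states the theorem as an immediate consequence of Eq.~\eqref{eq:laplacian-symmetries} and Lemma~\ref{Laplacian-similarities} applied to the defining relations $f\circ\phi=\lambda f$ and $f_1\circ\phi=\lambda f_2$, which is precisely your one-step argument (including the harmless nonzero constants $\lambda$ and $\lambda/\mu^2$). Your remark on the degenerate harmonic case is a small extra observation, consistent with the paper's definitions, and nothing further is needed.
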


\noindent Notice that in general, $\sym(f)$ (resp. $\simm(f_1,f_2)$) will be a proper subgroup of $\sym(\triangle f)$ (resp. $\simm(\triangle f_1,\triangle f_2)$).

Now let us present the general strategy of our approach. We will present it for symmetries; the case of similarities is analogous. From Lemma \ref{lem-recognize}, in order to check whether or not $\f x\mapsto A\cdot\f x+\f b$ is a symmetry of $\av{C}$, one just needs to verify whether or not
\begin{equation}\label{eq:test of symetry}
  f(A\cdot \f x+\f b)=\lambda f(\f x)
\end{equation}
holds for some $\lambda \neq 0$. However, while Eq. \eqref{eq:test of symetry} is useful to recognize whether the hypersurface defined by $f$ possesses or not a given symmetry, using this equation to find all the~possible symmetries would lead to a~practically unsolvable system of nonlinear equations. Hence our plan is the following:

\begin{center}
 {\it Reduce the possible symmetries of a given curve or surface to finitely many candidates $\phi_1,\dots,\phi_k\in\iso{2}$, which can be tested afterwards.}
\end{center}

Symmetries of curves of degree one or two can be found by elementary methods, but for higher degrees we run into difficulties. Fortunately, applying the Laplacian we can reduce the degree. Therefore, our plan is refined into the following one:

\begin{center}
 {\it Apply the Laplacian operator until we reach either a polynomial of degree 1, or a polynomial of degree 2, or a harmonic polynomial.}
\end{center}

We will see that, except in some trivial types of curves to be excluded from our study, in each case we can find finitely many candidates symmetries for $\av{\mathcal C}$, which will be tested afterwards using Eq. \eqref{eq:test of symetry}. Even more, in order to compute the candidates we just need to study either harmonic polynomials, or simple curves whose symmetries can be derived by elementary methods (conic curves, unions of a circle and a line).

The case of similarities is analogous, but with Eq. \eqref{eq:test of symetry} being replaced by the following equation:
\begin{equation}\label{eq:test of similarity}
  f_1(\mu A\cdot \f x+\f b)=\lambda f_2(\f x).
\end{equation}

\section{Computation of the symmetries.} \label{sec-symmetries}

Let ${\mathcal C}$ be an algebraic plane curve, implicitly defined by a square-free polynomial. It is well known (see Corollary 4 in \cite{AHM14}) that the only algebraic curves with infinitely many symmetries are the unions of parallel lines and the unions of concentric circles. Curves $f(x,y)=0$ corresponding to unions of parallel lines can be recognized by checking whether or not there exists a constant vector $\bar{u}$ such that $\nabla f\cdot \bar{u}=0$, where $\nabla f$ represents the gradient vector of $f$; in turn, this amounts to solving a linear system. Also, curves $f(x,y)=0$ corresponding to unions of concentric circles can be recognized by applying similar ideas to those in \cite{AG17} or \cite{Vrseck} to detect surfaces of revolution in 3-space. Therefore, we will exclude from our study curves which are the unions of parallel lines, and curves which are the unions of concentric circles. As a consequence we will assume that ${\mathcal C}$ has a finite group of symmetries. The following lemma is a rephrasing of the classification theorem of the finite groups of symmetries of the Euclidean plane.  

\begin{lemma}\label{lem:curves finite group}
If $\sym^+(\av C)$ is a non-trivial finite group of direct symmetries of $\av{C}$, then it is a~group of symmetries of a~regular $n$-gon for some $2\leq n\leq \deg\av C$.
\end{lemma}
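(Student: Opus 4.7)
The proof splits into two parts: identifying the structure of $\sym^+(\av{C})$ as a cyclic rotation group, and bounding its order by $\deg\av{C}$.

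For the structural part, any finite subgroup $G\subset\iso{2}^+$ has a common fixed point $\bar p=\tfrac{1}{|G|}\sum_{g\in G}g(p_0)$, the centroid of the orbit of any point $p_0$. Translating $\bar p$ to the origin identifies $G$ with a finite subgroup of $\SO{2}$, which is necessarily cyclic, generated by the rotation $\rho$ through angle $2\pi/n$; this is precisely the direct symmetry group of a regular $n$-gon centered at $\bar p$, and the non-triviality hypothesis excludes $n=1$.

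For the degree bound, I would place the center of rotation at the origin so that $\rho$ is the linear orthogonal map $\mathbf{x}\mapsto A\mathbf{x}$. By Remark~\ref{high-form}, $A$ is itself a symmetry of the leading form $f_N$ of $f$, where $N=\deg\av{C}$. The plan is to factor $f_N$ over $\C$ into $N$ linear forms and study the permutation induced by the projective action of $A$ on $\mathbb{P}^1(\C)$: since the only points of $\mathbb{P}^1(\C)$ fixed by a non-trivial rotation are the two isotropic directions $(1:\pm i)$, every non-isotropic linear factor of $f_N$ lies in a non-trivial $\langle\rho\rangle$-orbit whose size is controlled by $n$, and matching the total orbit size against $N$ will yield the bound. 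If every factor of $f_N$ were isotropic, then reality would force $f_N$ to be proportional to $(x^2+y^2)^{N/2}$; one then iterates the argument on the lower homogeneous components $f_{N-1},f_{N-2},\ldots$, each satisfying $f_i\circ A=\lambda f_i$ since $A$ is orthogonal. The only way the whole chain can collapse into purely isotropic pieces is for $f$ to be a polynomial in $x^2+y^2$, but then $\sym^+(\av{C})$ would contain all of $\SO{2}$, contradicting its finiteness.

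The main obstacle is the parity bookkeeping in the case $n$ even: because $\rho^{n/2}=-I$ acts trivially on $\mathbb{P}^1(\C)$, the projective orbits of non-isotropic directions have only size $n/2$, so a naive count yields only $n\le 2N$. Sharpening to the stated bound $n\le N$ requires combining the orbit analysis with the reality of the coefficients of $f$, which forces complex conjugate linear factors to pair up with equal multiplicities, together with the dichotomy $\lambda\in\{\pm 1\}$ from Lemma~\ref{lem:lambda}, in order to rule out the configurations that would otherwise saturate the weaker bound.
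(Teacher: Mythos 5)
Your first half (a finite subgroup of $\iso{2}^+$ fixes the centroid of any orbit, hence is conjugate into $\SO{2}$ and is cyclic, i.e.\ the rotation group of a regular $n$-gon) is correct, and it is essentially all the paper itself offers: the lemma is stated there without proof, as ``a rephrasing'' of the classical classification of finite planar isometry groups. So the only part that actually needs an argument is the bound $n\le\deg\av C$, and that is exactly where your proposal stops being a proof. You describe a plan (factor the leading form $f_N$ over $\C$ and count orbits of the projective action of the rotation on its linear factors), you correctly observe that when $n$ is even this only yields $n\le 2N$ because $\rho^{n/2}=-I$ acts trivially on $\mathbb{P}^1(\C)$, and you then assert, without any argument, that reality of $f$ together with $\lambda\in\{\pm1\}$ from Lemma~\ref{lem:lambda} rules out the configurations saturating the weaker bound. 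That missing step is not a detail to be filled in later: it cannot be carried out as stated.

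Concretely, the ``$\lambda=-1$'' configurations you would need to exclude do occur. Take $f(x,y)=xy$: it is real, square-free, of degree $2$, not a union of parallel lines or of concentric circles, and $f\circ\rho=-f$ for the rotation $\rho$ by $\pi/2$ about the origin, so $\sym^+(\av C)$ is cyclic of order $4>\deg\av C$. More generally, $f=\mathrm{Re}\bigl((x+\I y)^m\bigr)\cdot P(x^2+y^2)$ with $P$ square-free and nonconstant (e.g.\ $P(t)=1+t$, $m\ge 3$) has a finite direct symmetry group containing a rotation of order $2m>\deg f=m+2\deg P$. Indeed, if you redo your count on monomials $z^a\bar z^b$, invariance under $\rho_{2\pi/n}$ forces $a-b\equiv 0\pmod n$ when $f\circ\rho=f$ (giving $n\le\deg\av C$ once concentric circles are excluded), but only $a-b\equiv n/2\pmod n$ when $f\circ\rho=-f$, giving $n\le 2\deg\av C$, and the examples above show this weaker bound is attained. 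So the gap in your argument is genuine, and closing it requires either restricting to the case $\lambda=1$ or adding a hypothesis excluding curves of the above type (the paper's algorithms exclude unions of concurrent lines, but the lemma as stated does not); your write-up neither proves the claimed bound nor identifies the extra hypothesis under which it holds.
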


The elements of a finite symmetry group are rotations, that will be denoted as $\rho_{\f p,\varphi}$, where $\f p$ is the center of rotation and $\varphi$ is the rotation angle, and reflections $\sigma_L$, where $L$ is the reflection axis.  We say that $\av{C}$ has rotational (resp. reflectional) symmetry if $\sym(\av C)$ contains at least one rotation (resp. reflection). Rotations are direct isometries, while reflections are opposite.



The answer provided by our method is the set of rotational symmetries and the set of reflectional symmetries of ${\mathcal C}$, or the proof of its non existence. In order to do this, we provide the candidates for the rotational and reflectional symmetries. Then we can use Eq. \eqref{eq:test of symetry}, with $\lambda=\pm 1$, for testing these candidates.

We will see that we can reduce the computation of the symmetries of ${\mathcal C}$ to the case of harmonic polynomials except for some special situations, that can be easily solved. For this reason, we first address the case of harmonic polynomials, and then we show how to reduce the other cases to this first case.

\subsection{Symmetries of harmonic polynomials}\label{subsec-harmonic}

Throughout this subsection we will assume that $\av{C}$ is defined by a non--constant harmonic polynomial $h\in\R[x,y]$, i.e., satisfying $\triangle h=0$, of degree $\mbox{deg}(h)>1$. Moreover, recall that the Laplace operator in polar coordinates $(r,\theta)$ is expressed, for a bivariate function $f=f(r,\theta)$, as
\begin{equation}\label{eq:laplace polar}
  \triangle f=\frac{1}{r}\frac{\partial}{\partial r}\left(r\frac{\partial f}{\partial r}\right)+\frac{1}{r^2}\frac{\partial^2 f}{\partial\theta^2}.
\end{equation}

\begin{lemma}\label{unions} If $h(x,y)$ is harmonic and $\mbox{deg}(h)>1$, then the curve $h(x,y)=0$ is not a union of two or more parallel lines, or a union of concentric circles.
\end{lemma}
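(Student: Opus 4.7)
The plan is to treat the two excluded configurations separately, reducing each to a normal form by means of a suitable isometry and then computing $\triangle h$ explicitly to reach a contradiction. The reduction is justified by Eq.~\eqref{eq:laplacian-symmetries}: since every rotation and every translation is an isometry, composing $h$ with such a map preserves harmonicity as well as the degree of $h$, and it also preserves the geometric structure at stake (a rotation maps a family of parallel lines to a family of parallel lines; a translation maps concentric circles to concentric circles with the new centre at the origin).

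For the parallel lines case, I would first rotate so that all the lines are vertical. Because $h$ is square-free and its zero set is the union of $n\geq 2$ distinct vertical lines, after the rotation $h(x,y)=c\prod_{i=1}^{n}(x-r_i)$ is a polynomial in $x$ alone of degree $n\geq 2$. Consequently $\triangle h=h''(x)$ has degree $n-2$ with nonzero leading coefficient $n(n-1)c$, which contradicts $\triangle h=0$.

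For the concentric circles case, I would first translate so that the common centre is the origin. Then $h$ factors as a constant multiple of $\prod_{i=1}^{n}(x^2+y^2-r_i^2)$ with $n\geq 1$, i.e.\ $h$ is a polynomial in $x^2+y^2$ and so a function of the polar radius $r$ alone. Using Eq.~\eqref{eq:laplace polar} and $\partial h/\partial\theta=0$, and writing $h=\sum_{k=0}^{n}a_k r^{2k}$ with $a_n\neq 0$, a one-line computation gives
\begin{equation*}
\triangle h \;=\; \frac{1}{r}\frac{\partial}{\partial r}\!\left(r\frac{\partial h}{\partial r}\right) \;=\; \sum_{k=1}^{n}4k^2 a_k\, r^{2k-2},
\end{equation*}
whose leading coefficient $4n^2 a_n$ is nonzero, again contradicting harmonicity.

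The only point that requires a little care is the reduction step itself, namely verifying that the normalising maps are genuine isometries so that Eq.~\eqref{eq:laplacian-symmetries} applies and both harmonicity and the configuration (parallel lines / concentric circles) transfer unchanged to the new polynomial. Once this is in place, each case is closed by the elementary polynomial computation above, so I do not expect any significant obstacle.
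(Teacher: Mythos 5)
Your proof is correct and takes essentially the same route as the paper's: reduce by an isometry to the normal form (vertical lines, resp.\ circles centred at the origin), invoke the commutation of the Laplacian with isometries in Eq.~\eqref{eq:laplacian-symmetries}, and then compute $\triangle h$ directly — as a second derivative in $x$ in the first case, and via the polar form \eqref{eq:laplace polar} in the second — to contradict $\deg h>1$. The only cosmetic differences are that you factor $h$ explicitly (the square-freeness assumption is not needed, since any polynomial in $x$ alone, or in $r$ alone, of degree at least $2$ already fails to be harmonic), whereas the paper simply writes $h$ as a univariate polynomial in $x$, respectively in $r$.
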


\begin{proof}
Assume that $h(x,y)=0$ is a union of parallel lines. Since the laplacian commutes with orthogonal transformations, we can rotate the parallel lines so that the lines are parallel to the~$y$-axis. Thus asume w.l.o.g. that $h(x,y)=\sum_{i=0}^na_ix^i$. Then the Laplacian is nothing but a second derivative with respect to the variable $x$ and we see that the only harmonic polynomials are of the form $h(x,y)=a_1 x+a_0$.

Assume now that $h(x,y)=0$ is a union of concentric circles. Analogously to the previous case, we can assume w.l.o.g. that all the circles are centered at the origin. Using polar coordinates $(r,\theta)$, the polynomial $h$ transforms into $\sum_{i=0}^na_ir^i$. Using \eqref{eq:laplace polar} we see that $\triangle h=0$ if and only if $h$ is constant.
\end{proof}

\begin{corollary}\label{corol-finite} If $h$ is harmonic and $\deg h>1$, then $\sym(h)$ is finite.
\end{corollary}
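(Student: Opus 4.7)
The plan is to argue by contradiction, combining the set-theoretic classification of curves with infinitely many symmetries together with Lemma~\ref{unions}.

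The first observation is that the inclusion $\sym(h)\subseteq \sym(\av{C})$ always holds, regardless of whether $h$ is square-free: if $h\circ\phi=\lambda h$ with $\lambda\neq 0$, then $\phi$ maps the zero set $\av{C}=\{h=0\}$ bijectively to itself, so $\phi\in\sym(\av{C})$. The reverse inclusion, provided by Lemma~\ref{lem-recognize}, would require square-freeness, which we have not assumed; however, we do not need it here. Consequently, if $\sym(h)$ were infinite, then $\sym(\av{C})$ would be infinite as well.

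Next I would invoke the classification recalled at the beginning of this section (Corollary~4 of \cite{AHM14}): the only planar algebraic curves with infinitely many symmetries are unions of parallel lines and unions of concentric circles. Hence $\av{C}$ would have to be of one of these two forms. But Lemma~\ref{unions} rules out both possibilities for the zero set of a harmonic polynomial of degree greater than~$1$, since such a polynomial could not be an arbitrary polynomial in $x$ alone (up to rotation) nor an arbitrary polynomial in $r$ alone in polar coordinates. This contradicts the assumption that $\sym(h)$ is infinite, and the corollary follows.

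I do not anticipate a significant obstacle: the whole argument is essentially a repackaging of the external classification together with the preceding lemma. The only subtle point worth mentioning is that the inclusion $\sym(h)\subseteq\sym(\av{C})$ used here goes in the easy direction and does not invoke square-freeness, which is what makes the reduction to Lemma~\ref{unions} go through cleanly.
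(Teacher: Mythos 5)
Your argument is correct and is essentially the paper's own (implicit) justification: Lemma~\ref{unions} excludes the two families singled out by the classification of curves with infinitely many symmetries, so $\sym(\av{C})$, and hence $\sym(h)$, is finite. Your remark that only the easy inclusion $\sym(h)\subseteq\sym(\av{C})$ is needed, so square-freeness (only established later in Lemma~\ref{on-harmonic}) is not required, is a careful and welcome touch.
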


Instead of studying the symmetries of $\av{C}$ we will focus on the symmetries of the~ {real} singular points of the vector field $\overrightarrow{v}(x,y)=(\partial_x h,-\partial_y h)$, where $\partial_x h,\partial_y h$ represent the partial derivatives of $h$ with respect to the variables $x,y$. We say that the point $\f q\in\R^2$ is a \emph{singular point} of the vector field $\overrightarrow{v}$, if $\overrightarrow{v}(\f q)=(0,0)$. Now for every $\phi\in\sym(h)$ and from Lemma \ref{lem:lambda}, we have $h\circ\phi=\pm h$; additionally, using the Chain Rule, we get $(\nabla h)\circ \phi=\pm J_\phi\cdot \nabla h$, where $\nabla h$ is the gradient of $h$, and $J_\phi$ is the Jacobian of $\phi$. Then $(\nabla h)\circ \phi$ vanishes iff $\nabla h$ vanishes too, so $\phi$ maps {real} singular points of $\overrightarrow{v}$ onto {real} singular points of $\overrightarrow{v}$. As a consequence, $\sym(h)$ is a subgroup of the symmetry group of the {real} singular points of $\overrightarrow{v}$. As we will see, the set of {real} singular points is always finite {and non-empty} for a harmonic polynomial $h$ and thus we can use the following result.

\begin{lemma}\label{lem:center}
  Let $Q=\{\f q_1,\ldots,\f q_N\}\subset\R^2$ be a finite set of points. Then the center of any rotational (all the axes of reflectional) symmetry of $Q$ is (pass through) the barycenter
  \begin{equation}
     \frac{1}{N}\sum_{i=1}^N\f q_i.
  \end{equation}
\end{lemma}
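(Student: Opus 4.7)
The plan is to prove that any symmetry of the finite set $Q$ must fix its barycenter, and then to invoke the elementary classification of fixed points of planar isometries.

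First, I would observe that any $\phi\in\sym(Q)$ induces a permutation of the points of $Q$. Indeed, since $\phi$ is a bijection of $\R^2$ mapping $Q$ onto itself, there exists a permutation $\tau$ of $\{1,\ldots,N\}$ with $\phi(\f q_i)=\f q_{\tau(i)}$ for all $i$.

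Second, I would use the affine form of isometries from Eq.~\eqref{eq isometry}, namely $\phi(X)=A\cdot X+\overrightarrow{b}$ with $A\in\OO{2}$, to show that $\phi$ fixes the barycenter $\f c:=\frac{1}{N}\sum_{i=1}^N \f q_i$. By linearity,
\begin{equation*}
  \phi(\f c)=A\cdot\f c+\overrightarrow{b}=\frac{1}{N}\sum_{i=1}^N\bigl(A\cdot\f q_i+\overrightarrow{b}\bigr)=\frac{1}{N}\sum_{i=1}^N\phi(\f q_i)=\frac{1}{N}\sum_{i=1}^N\f q_{\tau(i)}=\f c,
\end{equation*}
where the last equality is just a reindexing of the sum. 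Thus $\f c$ is a fixed point of $\phi$.

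Third, I would apply the classification of fixed points of non-trivial planar isometries: a rotation $\rho_{\f p,\varphi}$ with $\varphi\not\equiv 0 \pmod{2\pi}$ has a unique fixed point, namely its centre $\f p$; a reflection $\sigma_L$ has fixed-point set equal to the line $L$. Specialising the identity $\phi(\f c)=\f c$ to these two cases immediately yields $\f p=\f c$ for rotations and $\f c\in L$ for reflections, which is the statement of the lemma.

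The argument is entirely elementary and I do not foresee any real obstacle; the only conceptual point worth emphasising is that isometries are affine and therefore commute with barycentric combinations, which is precisely what makes the displayed computation go through.
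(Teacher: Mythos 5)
Your argument is correct: symmetries of $Q$ permute its points, affine isometries commute with barycentric combinations so the barycenter is a fixed point, and the classification of fixed-point sets (a non-trivial rotation fixes only its center, a reflection fixes exactly its axis) gives the conclusion. The paper states this lemma without proof, treating it as an elementary standard fact, and your write-up is precisely the standard argument that would be supplied, so there is nothing to compare beyond noting that you have filled in the omitted details correctly (including the sensible restriction to non-trivial rotations, for which alone the ``center'' is well defined).
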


\begin{remark}
In Fluid Dynamics, the vector field $\overrightarrow{v}$ is known as \emph{complex velocity}, and $h$ is called the \emph{velocity potential} (see \cite{Batch}). Different velocity potentials are used to model different flows. Additionally, in that context the singular points of $\overrightarrow{v}$ are called \emph{stagnation points}.
\end{remark}

Let $S_{\overrightarrow{v}}$ denote the set of {real} singular points of the vector field. In order to find the group $\sym(S_{\overrightarrow{v}})$, we first identify $\R^2$ with $\C$ via $(x,y)\leftrightarrow x+\I y$. The vector field
$\overrightarrow{v}(x,y)$ can be naturally replaced by a~complex function
\begin{equation}\label{assoc-g}
  g(x,y)=\partial_x h-\I \partial_y h.
\end{equation}
Observe that since $h(x,y)$ is nonconstant by hypothesis, $g(x,y)$ is not identically zero. The~standard substitution
\begin{equation}\label{eq:subst}
   x=\frac{1}{2}(z+\overline{z})\qquad\text{and}\qquad  y=-\frac{\I}{2}(z-\overline{z})
\end{equation}
allows to write $g(x,y)$ as a complex function $g(z,\overline{z})$ in the complex variable $z$. Using the harmonicity of $h$ one can show that $g(x,y)$ satisfies the Cauchy-Riemann conditions, and therefore that $g(x,y)$ is holomorphic. Thus, $g(z,\overline{z})$ does not depend on $\overline{z}$, i.e. $g(z,\overline{z})=g(z)$, where
\begin{equation}\label{assoc-z}
  g(z)=\sum_{j=0}^{n-1}a_jz^j=a_{n-1}\prod_{j=1}^{n-1}(z-\xi_j).
\end{equation}
Since $g(z)$ is not identically zero, $g(z)$ has finitely many roots. Therefore $\partial_x h,\partial_y h$ cannot have any common factor. {Additionally, $g(z)$ is not constant, so $S_{\overrightarrow{v}}$ is non-empty. Therefore,} we have the following result.

\begin{lemma} \label{on-harmonic}
Let $h(x,y)$ be a nonconstant harmonic bivariate polynomial. Then we cannot write $h(x,y)=\tilde{h}(x,y)+c$, where $\tilde{h}(x,y)$ has multiple factors, and $c\in \R$. In particular, $h(x,y)$ must be square-free, and the set of singular points of $h(x,y)=0$ is finite {and non-empty}.
\end{lemma}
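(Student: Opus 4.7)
The overall plan is to leverage the structure already set up right before the lemma: the complexified function $g(z)=\partial_x h-\I\partial_y h$, which by harmonicity is a polynomial in $z$ alone, and which is nonzero (indeed nonconstant, since the standing assumption throughout this subsection is $\deg h>1$). All three claims of the lemma can then be read off from two consequences of this setup: $g$ has only finitely many zeros, and $\partial_x h,\partial_y h$ share no nonconstant polynomial factor in $\R[x,y]$.

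First I would handle the decomposition statement $h=\tilde{h}+c$ by contradiction. If $\tilde{h}$ had a multiple factor, I would write $\tilde{h}=p^{e}q$ with $p\in\R[x,y]$ nonconstant and $e\ge 2$. Since constants vanish under partial differentiation, $\partial_x h=\partial_x\tilde{h}$ and $\partial_y h=\partial_y\tilde{h}$; a one-line product-rule calculation then shows that $p$ divides both of these partial derivatives. This contradicts the no-common-factor fact, so no such decomposition can exist. The claim that $h$ itself is square-free is just the special case $c=0$.

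Next I would address the claim about singular points. Under the identification $\R^2\leftrightarrow\C$, the common real zero set of $\partial_x h$ and $\partial_y h$ coincides with the zero set of $g(z)$, hence sits inside the finite set of complex roots of $g$; this gives finiteness. For nonemptiness, the standing hypothesis $\deg h>1$ forces $\deg g\ge 1$, and the fundamental theorem of algebra supplies a root $z_0=x_0+\I y_0$ of $g$; the corresponding point $(x_0,y_0)\in\R^2$ is then a singular point of $\overrightarrow{v}$.

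The only genuinely subtle step is the one already glossed in the paragraph preceding the lemma: why $\partial_x h$ and $\partial_y h$ cannot share a nonconstant factor in $\R[x,y]$. I would justify it by observing that any such common factor would also divide $g$ in $\C[x,y]$, but the irreducible factorization of $g$ there consists solely of linear forms in $x+\I y$; a short check (for instance, comparing $p$ with the polynomial $\overline{p}$ obtained by conjugating coefficients) shows that no nonconstant product of such forms can lie in $\R[x,y]$. Once this point is nailed down, every other piece of the lemma falls out mechanically.
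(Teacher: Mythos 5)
Your proposal is correct and follows essentially the same route as the paper, which derives the lemma from the discussion of the associated holomorphic polynomial $g(z)=\partial_x h-\I\,\partial_y h$: finiteness of its roots gives the no-common-factor property of the partials (hence square-freeness and the impossibility of $h=\tilde{h}+c$ with $\tilde{h}$ non-square-free), and nonconstancy of $g$ (from the standing assumption $\deg h>1$) gives non-emptiness of the singular set. You merely make explicit two steps the paper glosses — the product-rule argument for the $\tilde{h}+c$ claim and the verification, via the factorization of $g$ into linear forms in $x+\I y$, that the partials share no nonconstant real factor — which is exactly what the paper's terse "therefore" intends.
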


Notice that since $h(x,y)$ is square-free, we are in the hypotheses of Lemma \ref{lem-recognize}. We will say that $g(z)$ is \emph{associated with} $h(x,y)$. The following lemma shows how to obtain $h(x,y)$ from a given complex polynomial $g(z)$.

\begin{lemma}\label{G-z}
Let $g(z)$ be a nonzero polynomial, and let $h(x,y)$ be a harmonic polynomial such that $g(z)$ is associated with $h(x,y)$. Then there exists a primitive $G(z)$ of $g(z)$ such that $h(x,y)=\re(G(z))$.
\end{lemma}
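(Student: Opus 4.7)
The plan is to produce $G(z)$ by integrating $g(z)$ as a formal polynomial and then adjusting by a real additive constant. The key technical tool is that for any holomorphic function $G=u+\I v$, the Cauchy--Riemann equations give the identity $G'(z)=\partial_x u-\I\partial_y u$, which mirrors the defining relation \eqref{assoc-g} for the polynomial associated with a harmonic function.

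First I would pick an arbitrary polynomial primitive $G_0(z)$ of $g(z)$. Since $g(z)=\sum_{j=0}^{n-1}a_j z^j$ as in \eqref{assoc-z}, we can simply take $G_0(z)=\sum_{j=0}^{n-1}\tfrac{a_j}{j+1}z^{j+1}$, which is a holomorphic polynomial on $\mathbb{C}$.

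Next I would compute the derivative of $G_0$ in terms of its real and imaginary parts. Write $G_0(z)=u(x,y)+\I v(x,y)$ with $u=\re(G_0)$ and $v=\im(G_0)$ understood as real polynomials after the substitution \eqref{eq:subst}. Because $G_0$ is holomorphic, the Cauchy--Riemann equations $\partial_x u=\partial_y v$ and $\partial_y u=-\partial_x v$ hold, and therefore
\begin{equation}
G_0'(z)=\partial_x u+\I\,\partial_x v=\partial_x u-\I\,\partial_y u.
\end{equation}
On the other hand $G_0'(z)=g(z)=\partial_x h-\I\,\partial_y h$ by construction.

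Comparing the real and imaginary parts yields $\partial_x u=\partial_x h$ and $\partial_y u=\partial_y h$, so $\nabla(h-u)\equiv 0$ on $\R^2$. Since $h-u$ is a polynomial with identically vanishing gradient, it is a real constant $c\in\R$. Setting $G(z):=G_0(z)+c$ gives another primitive of $g(z)$, and
\begin{equation}
\re(G(z))=u(x,y)+c=h(x,y),
\end{equation}
as required. The argument is essentially routine once the Cauchy--Riemann identity for $G_0'$ is in place; the only subtle point to check is that the resulting additive constant $c$ is real (which it must be, because both $h$ and $u$ are real polynomials), so that $G(z)=G_0(z)+c$ is again a polynomial primitive of $g(z)$.
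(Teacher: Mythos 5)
Your proposal is correct and follows essentially the same route as the paper: choose a polynomial primitive, use the Cauchy--Riemann equations to write its derivative as $\partial_x u-\I\,\partial_y u$, compare with $g=\partial_x h-\I\,\partial_y h$, and conclude $h-u$ is a real constant absorbed into the primitive. No gaps.
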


\begin{proof} Let $\tilde{G}(z)$ be a primitive of $g(z)$, and let $\tilde{G}(x+iy)=U(x,y)+\I V(x,y)$. Since $\tilde{G}(z)$ is holomorphic, we have $\tilde{G}'(z)=g(z)=\partial_x U+\I \partial_x V=\partial_x U-\I \partial_y U$. On the other hand, by definition $g(z)=\partial_x h-\I \partial_y h$. Therefore, we have $\partial_x U=\partial_x h$, $\partial_y U=\partial_y h$ and thus $h=U+c$, where $c$ is a real constant. Hence $h(x,y)=\re(\tilde{G}(z)+c)$, where $G(z)=\tilde{G}(z)+c$ is also a primitive of $g(z)$.
\end{proof}

Now we are ready to relate $\sym(h)$ with the polynomial $g(z)$. Since the singular set $S_{\overrightarrow{v}}$ corresponds exactly to the roots of $g(z)$ and the sum of the roots of $g(z)$ is encoded in the coefficient $a_{n-2}$ via the Cardano-Vieta's formulae, Lemma~\ref{lem:center} directly provides the following result.

\begin{theorem}\label{thm:barycenter}
  Let $h(x,y)$ be a harmonic polynomial defining a curve invariant under the rotation $\rho_{\varphi,\f p}$. If $g(z)$ is the polynomial constructed before, and the center $\f p$ is identified with a complex number, then
  \begin{equation}\label{eq:center harmonic}
      \f p =-\frac{a_{n-2}}{(n-1)a_{n-1}}.
  \end{equation}
\end{theorem}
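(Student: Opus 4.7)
The plan is to combine the invariance of the singular point set $S_{\overrightarrow{v}}$ under $\sym(h)$ with the barycenter characterization in Lemma~\ref{lem:center}, and then read off the barycenter from the coefficients of $g(z)$ via Vieta's formulas.

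First, I would observe that by Lemma~\ref{on-harmonic} the harmonic polynomial $h$ is square-free, so Lemma~\ref{lem-recognize} applies: the rotation $\rho_{\varphi,\f p}$, being a symmetry of $\av{C}$, lies in $\sym(h)$. As noted in the discussion preceding Lemma~\ref{lem:center}, any element of $\sym(h)$ maps the set $S_{\overrightarrow{v}}$ of singular points of $\overrightarrow{v}=(\partial_x h,-\partial_y h)$ onto itself, because $(\nabla h)\circ\phi = \pm J_\phi\cdot\nabla h$ vanishes precisely where $\nabla h$ does. By Lemma~\ref{on-harmonic} the set $S_{\overrightarrow{v}}$ is finite and non-empty, so Lemma~\ref{lem:center} applies and forces $\f p$ to coincide with the barycenter of $S_{\overrightarrow{v}}$.

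Next I would translate the barycenter into complex terms. Under the identification $\R^2\leftrightarrow\C$, the singular points of $\overrightarrow{v}$ are exactly the points where $g(x,y)=\partial_x h-\I\partial_y h$ vanishes; by the discussion around \eqref{assoc-z}, these are precisely the roots $\xi_1,\ldots,\xi_{n-1}$ of the univariate polynomial $g(z)=a_{n-1}\prod_{j=1}^{n-1}(z-\xi_j)$. Therefore
\begin{equation*}
  \f p \;=\; \frac{1}{n-1}\sum_{j=1}^{n-1}\xi_j.
\end{equation*}
Applying Cardano--Vieta to $g(z)=\sum_{j=0}^{n-1}a_j z^j$ gives $\sum_{j=1}^{n-1}\xi_j = -a_{n-2}/a_{n-1}$, and substituting yields the claimed formula $\f p = -a_{n-2}/\big((n-1)a_{n-1}\big)$.

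I do not expect a genuine obstacle; the statement is essentially a repackaging of already established facts. The one point that deserves care is making sure that $S_{\overrightarrow{v}}$ is non-empty and finite (so that the barycenter is well-defined and Lemma~\ref{lem:center} actually applies), which is exactly what Lemma~\ref{on-harmonic} guarantees, together with the fact that $\deg g = n-1 \geq 1$ because $\deg h > 1$ forces $a_{n-1}\neq 0$.
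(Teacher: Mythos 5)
Your proposal is correct and follows essentially the same route as the paper, whose (implicit) proof is exactly the chain you describe: symmetries of $h$ permute the singular points of $\overrightarrow{v}$, these points are the roots of $g(z)$, Lemma~\ref{lem:center} forces the rotation center to be their barycenter, and Cardano--Vieta converts the barycenter into $-a_{n-2}/\bigl((n-1)a_{n-1}\bigr)$. Your added care about $S_{\overrightarrow{v}}$ being finite and non-empty (via Lemma~\ref{on-harmonic}) and about square-freeness enabling Lemma~\ref{lem-recognize} matches the paper's preparatory discussion, so nothing essential differs.
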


\begin{remark}\label{tentative-angle}
By Lemma~\ref{lem:curves finite group}, the potential candidates for the rotation angle are of the type $\frac{2\pi}{k}$, where $k\leq n$. In particular, $k$ can take all admissible values except of odd $n$ in which case $k<n$.
\end{remark}

\begin{figure}[ht]
\begin{center}
  \includegraphics[width=0.4\textwidth]{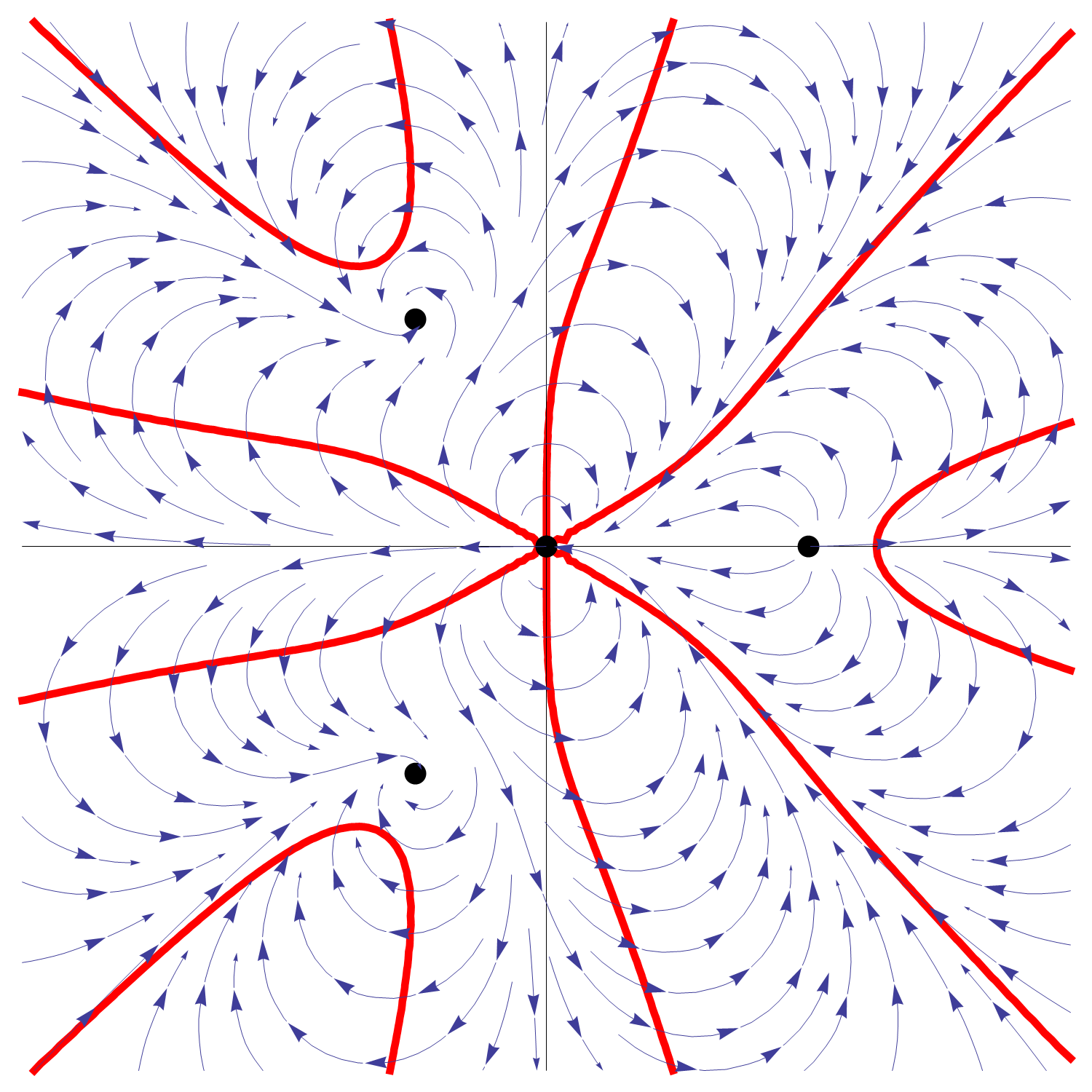}
\begin{minipage}{0.7\textwidth}
\caption{Curve $h(x,y)=0$ (red), Vector field $(\partial_x h,-\partial_y h)$ (blue) and its singularities (thick black dots). \label{fig:barycenter}}
\end{minipage}
\end{center}
\end{figure}

\begin{example}\label{sextic}
We start with a harmonic sextic
\begin{equation}
  h(x,y)=x^6-15 x^4 y^2-2 x^3+15 x^2 y^4+6 x y^2-y^6.
\end{equation}
The complex function associated with the vector field then has the form
\begin{equation}
g(x,y)=-2 x^3 + x^6 + 6 x y^2 - 15 x^4 y^2 + 15 x^2 y^4 - y^6 -\I \left(-30 x^4 y+60 x^2 y^3+12 x y-6 y^5\right),
\end{equation}
which after the substitution in Eq. \eqref{eq:subst} gives rise to
\begin{equation}
 g(z)=6z^5-6z^2.
\end{equation}
Therefore, $g(z)=z^2(6z^3-6)$, so $\tilde{g}(z)=6z^3-6$, and $N=3$. Formula \eqref{eq:center harmonic} reveals that the only possible center of rotational symmetry of the curve $h(x,y)=0$ is the point $\f p=0+0\I\simeq(0,0)$. The only non-trivial rotation angle for the curve (see Fig.~\ref{fig:barycenter}) is $\varphi=2\pi/3$; therefore $\sym^+(h)= \langle\rho_{\f p,2\pi/3}\rangle$.
\end{example}

Now let us focus on finding the candidates for the axes of symmetry, under the same assumptions as before. By Lemma~\ref{lem:center} and Theorem~\ref{thm:barycenter} we know that all the axes of symmetry intersect at the point $\f p =-\frac{a_{n-2}}{(n-1)a_{n-1}}$. If we move the point $\f p$ to the origin, the polynomial $h$ remains harmonic and the second highest coefficient of the new associated polynomial $g(z)$ vanishes. Then we have the following result.

\begin{theorem}\label{thm:direction of axis}
  Let $h(x,y)$ be a harmonic polynomial such that the associated polynomial $g(z)=\sum_{i=0}^{n-1}a_iz^i$ satisfies $a_{n-2}=0$.
  Then any possible symmetry axis of $h(x,y)=0$ passes through the origin. Let $\ell$ be the index such that $a_\ell\not=0$ but $a_i=0$ for each $i<\ell$.  If $\ell<n-1$ then the angles $\varphi$ between the tentative symmetry axes of $h(x,y)=0$ and the $x$-axis satisfy the~relationship
\begin{equation}\label{eq:direction harmonic}
      (n-\ell-1)\varphi\equiv \mathrm{arg}\left((-1)^{n-\ell-1}\frac{a_\ell}{a_{n-1}}\right) \mod 2\pi.
  \end{equation}
  Otherwise $g(z)=a_{n-1}z^{n-1}$ and the tentative axes of symmetry of $h(x,y)=0$ possess the directions
  \begin{equation}\label{eq:direction harmonic-2}
      \varphi = \frac{1}{n}\bigg( k\pi -\mathrm{arg}(a_{n-1})\bigg)
        \end{equation}
  for $k=1,\ldots,n$.
\end{theorem}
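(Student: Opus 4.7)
My plan is to translate the reflection-symmetry condition into an algebraic identity on the coefficients $a_j$ of $g(z)$, and then read off the formula by imposing that identity at the two extremal nonzero indices. The first assertion---that every axis passes through the origin---is immediate from Theorem~\ref{thm:barycenter} and Lemma~\ref{lem:center}: under $a_{n-2}=0$ the barycenter of the real singular set lies at the origin, so every rotation center and every reflection axis must pass through it.

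For the direction formula, I would parametrize a reflection over the line through the origin at angle $\varphi$ by its complex form $\sigma_\varphi(z)=e^{2i\varphi}\bar z$. Writing $h=\re G(z)$ via Lemma~\ref{G-z} and observing that $\overline{G(\bar z)}=\bar G(z)$ (the polynomial with conjugated coefficients), the condition $h\circ\sigma_\varphi=\lambda h$ (with $\lambda=\pm 1$ by Lemma~\ref{lem:lambda}) becomes $\re\bar G(e^{-2i\varphi}z) = \lambda\,\re G(z)$. Two holomorphic functions with identical real parts differ only by a purely imaginary constant, so differentiating yields $\bar g(e^{-2i\varphi}z)=\lambda\,e^{2i\varphi}\,g(z)$, and matching coefficients of $z^j$ produces the master identity
\begin{equation*}
\bar a_j = \lambda\,e^{2i(j+1)\varphi}\,a_j,\qquad j=0,\ldots,n-1. \quad(\ast)
\end{equation*}

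In Case 1 ($\ell<n-1$), I would apply $(\ast)$ at $j=n-1$ and $j=\ell$ and divide the resulting equations to eliminate $\lambda$, obtaining
\begin{equation*}
e^{2i(n-\ell-1)\varphi} = \frac{\bar a_{n-1}\,a_\ell}{a_{n-1}\,\bar a_\ell} = e^{2i\,\mathrm{arg}(a_\ell/a_{n-1})};
\end{equation*}
taking arguments yields the stated congruence, with the factor $(-1)^{n-\ell-1}$ pinning down the correct lift from mod~$\pi$ to mod~$2\pi$. In Case 2 ($\ell=n-1$, so $g=a_{n-1}z^{n-1}$), $(\ast)$ collapses to the single relation $\bar a_{n-1}=\lambda\,e^{2in\varphi}\,a_{n-1}$; imposing $\lambda=\pm 1$ forces $n\varphi+\mathrm{arg}(a_{n-1})\in\pi\mathbb{Z}$, which gives the $n$ listed candidate directions.

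The main obstacle I anticipate is the modular bookkeeping: the axis is intrinsically defined only modulo $\pi$ in $\varphi$, whereas $(\ast)$ naturally lives modulo $2\pi$, and both signs of $\lambda$ must be tracked through the two extremal equations so that no genuine candidate is dropped. Once this is set up, the rest of the derivation is the symbolic manipulation sketched above.
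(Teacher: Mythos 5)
Your reduction to the coefficient identity $(\ast)$ is correct, and it is a genuinely different route from the paper's: the paper treats Case 1 through the symmetry of the root set of $g$ and Vieta's formula for the product of the nonzero roots, and Case 2 by integrating $g$ and passing to the line arrangement $\re\left(\tfrac{a_{n-1}}{n}z^n\right)=0$, whereas your functional equation $\bar g(e^{-2\I\varphi}z)=\lambda e^{2\I\varphi}g(z)$ gives one constraint per coefficient and handles both cases uniformly. The gaps are exactly at the two places where you pass from $(\ast)$ to the displayed formulas. In Case 1, dividing the equations at $j=n-1$ and $j=\ell$ gives $e^{2\I(n-\ell-1)\varphi}=e^{2\I\arg(a_\ell/a_{n-1})}$, which determines $(n-\ell-1)\varphi$ only modulo $\pi$; nothing in your argument upgrades this to a congruence mod $2\pi$, and the factor $(-1)^{n-\ell-1}$ is invisible mod $\pi$, so it cannot "pin down a lift". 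In fact no such lift exists: for $g(z)=(z-3)(z+1)^3=z^4-6z^2-8z-3$ (so $n=5$, $\ell=0$, $a_{n-2}=0$) the harmonic polynomial $h=\re\left(\tfrac{1}{5}z^5-2z^3-4z^2-3z\right)$ has real coefficients, so the $x$-axis ($\varphi=0$) is a symmetry axis, while \eqref{eq:direction harmonic} would require $4\varphi\equiv\arg(-3)=\pi \bmod 2\pi$. The honest output of your computation is the mod-$\pi$ relation; the discrepancy comes from the sign of the contribution of the on-axis roots, which is precisely what the paper's Vieta argument also glosses over (it tacitly assumes the product of the real parameters $c_i$ is positive).

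In Case 2 there is an algebra slip: from $\bar a_{n-1}=\lambda e^{2\I n\varphi}a_{n-1}$ with $\lambda=\pm1$ you get $e^{2\I(n\varphi+\arg a_{n-1})}=\lambda$, hence $n\varphi+\arg(a_{n-1})\in\tfrac{\pi}{2}\Z$, not $\pi\Z$; only the branch $\lambda=+1$ yields the $n$ directions \eqref{eq:direction harmonic-2}. Discarding $\lambda=-1$ loses genuine candidates: for $h=x^2-y^2$ (i.e. $g=2z$) the lines $y=\pm x$ satisfy $h\circ\sigma=-h$ and are symmetry axes of the curve, but are not of the form \eqref{eq:direction harmonic-2}. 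The paper proves this case differently (integrating $g$ and taking the axes of the resulting line arrangement); carried out correctly, your route gives the larger candidate set $\varphi=\tfrac{1}{n}\left(k\tfrac{\pi}{2}-\arg a_{n-1}\right)$, and matching the statement as printed would require justifying the removal of the $\lambda=-1$ branch, which is not possible in general. So: right idea and a cleaner mechanism than the paper's, but the final two steps assert more than the computation delivers, and should be restated (mod $\pi$ in Case 1, both $\lambda$-branches in Case 2) rather than forced to agree with the printed formulas.
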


\begin{proof}
The fact that any possible axis of symmetry passes through the origin has been shown before.
Consider first the case $\ell<n-1$. If  $h(x,y)=0$ is symmetric w.r.t. some axis, then so are the roots of the associated polynomial $g(z)$. Since the axis passes through the origin, there are three types of roots: (1) the origin; (2) points on the axis distinct from the origin; and (3) roots not on the axis, paired with a second symmetric root.  Hence the polynomial $g$ can be factored as
\begin{equation}
   g(z)=\sum_{i=0}^{n-1}a_iz^i=a_{n-1}z^{\ell-1}\cdot\prod_{i=1}^{k_1}(z-c_i\e^{\I\varphi})\cdot\prod_{j=1}^{k_2}(z-d_j\e^{\I(\varphi+\psi_j)})(z-d_j\e^{\I(\varphi-\psi_j)}).
\end{equation}
The product of the nonzero roots of $g(z)$ is equal to
\begin{equation}
  \left(\prod_{i=1}^{k_1}c_i\right)\cdot\left(\prod_{j=1}^{k_2}d_j^2\right)\e^{\I(k_1+2k_2)\varphi},
\end{equation}
where $k_1+2k_2=n-\ell-1$. Then the relation \eqref{eq:direction harmonic} follows from Cardano-Vieta's formulae.

It remains to prove the case $g(z)=a_{n-1}z^{n-1}$. Now directly by Lemma \ref{G-z} we have
\begin{equation}
  h(x,y)=\re\left(\int g(z)\diff z\right) = \re\left(\frac{a_{n-1}}{n}z^n+\bf{c}\right)=h^{\star}(x,y)+c,
\end{equation}
where $h^{\star}=\re\left(\frac{a_{n-1}}{n}z^n\right)$ and $c=\re (\bf{c})$. Writing $z^n =r^n(\cos n\varphi+\I \sin n\varphi)$ we immediately see that $h^{\star}(x,y)=0$ consists of $n$ lines through the origin, forming angles $-\mathrm{arg}(a_{n-1})+k\frac{\pi}{2n}$ with the $x$-axis, where $0\leq k<n$. Furthermore, since $h^{\star}(x,y)$ corresponds to an arrangement of lines through the origin, $h^{\star}$ is homogeneous, in fact the homogeneous form of $h(x,y)$ of maximum degree. Finally, by Remark \ref{high-form} and since we know that all the axes of symmetry of $h(x,y)=0$ go through the origin, we conclude that the axes of symmetry of $h(x,y)=0$ are among the axes of symmetry of $h^{\star}(x,y)=0$.
\end{proof}

\begin{example}\label{sextic}
We consider again the sextic curve in Example \ref{sextic}. Here $g(z)$ already has the form required in Theorem \ref{thm:direction of axis}. Additionally, $n-1=5$, $\ell=2$, $a_{n-1}=6$, $a_\ell=-6$. Therefore, Eq. \eqref{eq:direction harmonic} provides \[3\varphi \equiv 0 \mod 2\pi.\]
Therefore, the only possible symmetry axes are the straight lines passing through the origin and enclosing the angles $0,2\pi/3,4\pi/3$ with the $x$-axis, which are certainly symmetry axes of the curve.
\end{example}

\begin{remark} The symmetry group of $h$ can be either equal to the symmetry group of the $n$-gon, or strictly a subgroup of it. For instance, both $h_1(x,y)=x^2-y^2$ and $h_2(x,y)=x^2-y^2+1$ give rise to $g(z)=2z$. However, the symmetry group of $h_1$ is the symmetry group of a square, $D_4$, while the symmetry group of $h_2$ is a proper subgroup of the symmetry group of the square, not including the mirror symmetries with respect to the lines $y=\pm x$.
\end{remark}

The main steps to compute the symmetries of a harmonic polynomial are summarized in Algorithm 1.

\begin{algorithm}[t!]
\begin{algorithmic}[1]
\REQUIRE A harmonic polynomial $h(x,y)$ defining an algebraic curve $\av{C}$, with finitely many symmetries.
\ENSURE The symmetries of $\av{C}$.
\STATE{[Rotation symmetries]}
\STATE{Compute the tentative center of rotation ${\bf p}$ of $\av{C}$ by means of the expression Eq. \eqref{eq:center harmonic}. }
\FOR{each angle $\varphi=\frac{2\pi}{k}$, with $k\leq\mbox{deg}(\av{C})$}
\STATE{check whether $h\circ \rho_{{\bf p},\varphi}=\pm h$, where $\phi_{{\bf p},\varphi}$ is the rotation of center ${\bf p}$ and rotation angle $\varphi$.}
\ENDFOR
\STATE{[Reflections]}
\STATE{Apply a translation $\tau$ such that $\tau({\bf p})=(0,0)$; let $h:=h\circ \tau^{-1}$}
\STATE{Compute the polynomials $g(x,y)$ and $g(z)$ in Eq. \eqref{assoc-g} and Eq. \eqref{assoc-z}.}
\IF{$g(z)\neq a_{n-1}z^{n-1}$}
\FOR{the $\varphi$s as in Eq. \eqref{eq:direction harmonic}}
\STATE{check whether $h\circ \rho_{{\bf 0},\varphi}=\pm h$, where $\rho_{{\bf 0},\varphi}$ is the reflection on the line through the origin, forming an angle $\varphi$ with the $x$-axis.}
\ENDFOR
\ELSE
\FOR{the $\varphi$s as in Eq. \eqref{eq:direction harmonic-2}}
\STATE{check whether $h\circ \rho_{{\bf 0},\varphi}=\pm h$, where $\rho_{{\bf 0},\varphi}$ is the reflection on the line through the origin, forming an angle $\varphi$ with the $x$-axis.}
\ENDFOR
\ENDIF
\STATE{{\bf return} the symmetries found, or the message {\tt No symmetries found}.}
\end{algorithmic}
\caption{Symms-harmonic}\label{alg-syms-harmoni}
\end{algorithm}

\subsection{Reduction to the harmonic case.}\label{red-symmetries}

Now let $f(x,y)$ be a square-free polynomial, not necessarily harmonic, defining a curve $\av{C}$. We want to find the symmetries of $\av{C}$. Successive application of the Laplace operator yields the sequence
\begin{equation}\label{eq:laplacianchain}
  f\longmapsto\triangle f\longmapsto\triangle^2 f\longmapsto\cdots\longmapsto\triangle^\ell f\longmapsto c,
\end{equation}
where $c$ is a constant (possibly zero), and the corresponding chain of groups of symmetries
\begin{equation}
  \sym(f)\subset\sym(\triangle f)\subset\sym(\triangle^2 f)\subset\cdots\subset\sym(\triangle^\ell f)\subset\iso{2}.
\end{equation}

We will refer to the chain in Eq. \eqref{eq:laplacianchain} as a \emph{chain of laplacians}. Now depending on the degree of $\triangle^\ell f$ we distinguish the following cases:

\vspace{0.3 cm}

\noindent\fbox{$\deg\triangle^\ell f=n>2$.} In this case $\triangle^\ell f$ is harmonic and we can use previous results to find $\sym(\triangle^\ell f)$. Then the symmetries of the original curve form a subgroup of $\sym(\triangle^\ell f)$ that can be easily identified.

\begin{example}
Let ${\mathcal C}$ be the planar algebraic curve defined by $f(x,y)=0$, where $f(x,y)=\frac{1}{10}x^5-\frac{1}{2}x^3y^2+\frac{1}{2}x^2+1$ (see Fig. \ref{fig:simmetry}). Here we get
\[
\triangle f=h=x^3-3xy^2+1,\mbox{ }\triangle^2 f=\triangle h=0.
\]
Therefore, $\sym(f)\subset \sym(h)$. In this case the complex function $g(z)$ associated with $h(x,y)$ is $g(z)=3z^2$. Hence, the possible center of rotation is the origin, the possible rotation angle is $\pi$, and the possible reflection axes are the coordinate axes. Regarding rotational symmetry,
\[f(-x,-y)=-\frac{1}{10}x^5+\frac{1}{2}x^3y^2+\frac{1}{2}x^2+1,\]
which is not a multiple of $f(x,y)$; therefore, ${\mathcal C}$ does not have rotational symmetry. Regarding reflections, $f(-x,y)$ is not a multiple of $f(x,y)$ either, but $f(x,-y)=f(x,y)$. Hence, ${\mathcal C}$ is symmetric only with respect to the $x$-axis.

\begin{figure}
\begin{center}
\includegraphics[scale=0.3]{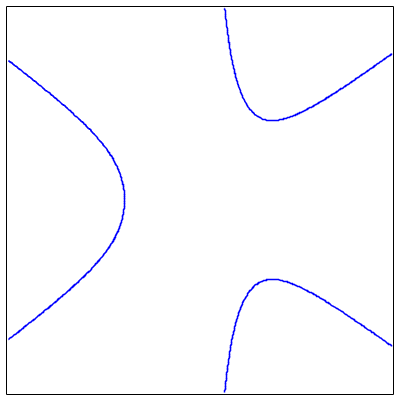}
\caption{The curve $\frac{1}{10}x^5-\frac{1}{2}x^3y^2+\frac{1}{2}x^2+1=0$.}\label{fig:simmetry}
\end{center}
\end{figure}

\end{example}

\vspace{0.3 cm}
\noindent\fbox{$\deg\triangle^\ell f=1$.} In this case $\triangle^\ell f$ defines a line $ax+by+c=0$. Since the symmetry group of a line is infinite, we need to reduce the possible candidates; in fact, our strategy will be to find a new bivariate, \emph{harmonic} polynomial $h$ defining a curve $\av{\tilde{C}}$ such that $\sym(\av{C})$ is a proper subgroup of $\sym(\av{\tilde{C}})$. In order to do this, first we observe that since the laplacian operator commutes with orthogonal transformations, applying to $\av{C}$ an orthogonal transformation ${\mathcal T}$ transforming $ax+by+c=0$ into $x=0$, we get $\triangle^\ell f=\tilde{a} x$. In general we cannot guarantee that $\tilde{a}=1$, but if $\tilde{a}\neq 1$ the reasoning can be easily adapted. So for simplicity we will assume that $\triangle^\ell f= x$.

Now let $k\leq \ell$ be the smallest natural number such that $\triangle^k f=p(x)$ only depends on $x$, so that $\widehat{f}=\triangle^{k-1} f$ \emph{does} depend on both $x,y$; notice that it can happen that $k=\ell$. Also, let $q\geq 0$ such that $k+q=\ell$, and let $\tilde{p}(x)\in \R[x]$ satisfy that $\tilde{p}''(x)=p(x)$ and $\tilde{p}'(0)=\tilde{p}(0)=0$, where $\tilde{p}''(x)$ represents the second derivative of $\tilde{p}(x)$; thus, seeing $p,\tilde{p}$ as functions of $x,y$, we have $\Delta \tilde{p}=p$. Also, observe that if we write
\begin{equation}\label{exp-p}
p(x)=x^{2q+1}+a_{2q}x^{2q}+\cdots +\cdots,
\end{equation}
then
\begin{equation}\label{p-tilde}
\tilde{p}(x)=\frac{x^{2q+3}}{(2q+3)(2q+1)}+a_{2q}\frac{x^{2q+1}}{(2q+2)(2q+1)}+\cdots
\end{equation}
Calling $h(x,y)=x$, the following diagrams show that $\widehat{f},p,h$, on one hand, and $\tilde{p},p,h$, on the other hand, belong to two chains of laplacians:

\begin{equation}\label{eq:laplacianchain2}
  \widehat{f}\longmapsto p \longmapsto \stackrel{\underbrace{q}}{\mbox{ }\cdots\mbox{ }}\longmapsto h
\end{equation}

\begin{equation}\label{eq:laplacianchain3}
  \tilde{p}\longmapsto p \longmapsto \stackrel{\underbrace{q}}{\mbox{ }\cdots\mbox{ }}\longmapsto h.
\end{equation}

Additionally, $\widehat{f}$ belongs to the laplacian chain starting with $f$. Now if $\phi$ is a symmetry of $f$, by Theorem \ref{thm:symmetry preservation} we know that $\phi$ is also a symmetry of $\widehat{f}$. Hence by Lemma \ref{lem:lambda} we have $\widehat{f}\circ \phi=\alpha\cdot \widehat{f}$, where $\alpha=1$ or $\alpha=-1$. Furthermore, since the laplacian commutes with orthogonal transformations, we also have $p\circ \phi=\alpha \cdot p$, and $h\circ \phi=\alpha \cdot h$; in particular, the positive or negative value of $\alpha$ is consistently the same for $\widehat{f},p,h$. Now let $h=\widehat{f}-\tilde{p}$; then $\triangle h=\triangle \widehat{f}-\triangle \tilde{p}=p-p=0$.

\begin{proposition} Let $h=\widehat{f}-\tilde{p}$, where $\tilde{p}=\tilde{p}(x)$ and $\widehat{f}=f(x,y)$ are defined as above. Then $\sym(f)\subset \sym(h)$.
\end{proposition}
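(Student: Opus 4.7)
The plan is to show directly that for every $\phi\in\sym(f)$ one has $h\circ\phi=\alpha h$ for some $\alpha\in\{\pm 1\}$, from which $\phi\in\sym(h)$ follows by the definition of $\sym$. Since curves with infinite symmetry group have been excluded from the discussion, $\sym(f)$ is finite, so every $\phi\in\sym(f)$ satisfies $\phi^k=\id$ for some $k\in\N$. By Theorem~\ref{thm:symmetry preservation}, $\phi$ is then a symmetry of every element of the chain \eqref{eq:laplacianchain2}, so Lemma~\ref{lem:lambda} applied to $\widehat{f}$ yields $\widehat{f}\circ\phi=\alpha\,\widehat{f}$ with $\alpha=\pm 1$; commutativity of $\triangle$ with isometries then forces $p\circ\phi=\alpha\,p$ and $x\circ\phi=\alpha\,x$ with the same sign.

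From $x\circ\phi=\alpha x$ and orthogonality of the linear part of $\phi$, a short calculation pins down the shape of $\phi$: the first row of the orthogonal matrix must be $(\alpha,0)$ and the first coordinate of the translation vector must vanish, forcing
\[
\phi(x,y)=(\alpha x,\,\beta y+c),\qquad \beta\in\{\pm 1\},\ c\in\R.
\]
Consequently $(p\circ\phi)(x,y)=p(\alpha x)$, so the identity $p\circ\phi=\alpha p$ becomes $p(\alpha x)=\alpha p(x)$: automatic when $\alpha=+1$, and saying that $p$ is an \emph{odd} polynomial when $\alpha=-1$.

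Next I would verify that $\tilde{p}$ inherits the parity of $p$. When $p$ is odd, integrating once gives a polynomial consisting only of even-degree monomials plus a constant of integration killed by $\tilde{p}'(0)=0$, and integrating a second time gives only odd-degree monomials plus a constant killed by $\tilde{p}(0)=0$; hence $\tilde{p}$ is odd, and in both sign cases $\tilde{p}(\alpha x)=\alpha\,\tilde{p}(x)$. The proof then finishes with the direct computation
\[
h\circ\phi=\widehat{f}\circ\phi-\tilde{p}\circ\phi=\alpha\,\widehat{f}-\tilde{p}(\alpha x)=\alpha\,\widehat{f}-\alpha\,\tilde{p}=\alpha h,
\]
so $\phi\in\sym(h)$. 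The only subtle point, which I expect to be the main obstacle, is this parity check: the normalization $\tilde{p}'(0)=\tilde{p}(0)=0$ chosen in the definition of $\tilde{p}$ is precisely what prevents the two constants of integration from breaking parity; without this specific choice of primitive the identity $\tilde{p}(\alpha x)=\alpha\,\tilde{p}(x)$ would fail and the cancellation in the final computation would collapse.
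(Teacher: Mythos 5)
Your proof is correct and follows essentially the same route as the paper: reduce to $\widehat{f}\circ\phi=\alpha\widehat{f}$ with $\alpha=\pm 1$ via Lemma~\ref{lem:lambda}, propagate the same $\alpha$ down the laplacian chain to $p$ and to $x$, split into the cases $\alpha=1$ and $\alpha=-1$, and in the latter case use that $p$ (hence, by the normalization $\tilde{p}(0)=\tilde{p}'(0)=0$, also $\tilde{p}$) is odd, so that $\tilde{p}\circ\phi=\alpha\tilde{p}$ and the difference $h=\widehat{f}-\tilde{p}$ satisfies $h\circ\phi=\alpha h$. Your extra remarks (finiteness of $\sym(f)$ justifying $\phi^k=\id$, and the explicit form $\phi(x,y)=(\alpha x,\beta y+c)$) only make explicit what the paper leaves implicit.
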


\begin{proof} It suffices to see that $\sym(\widehat{f})\subset \sym(h)$. In order to prove this, let us show that for any $\phi\in\sym(\widehat{f})$, $\widehat{f}\circ \phi=\alpha \cdot \widehat{f}$ implies $h\circ \phi=\alpha \cdot h$ too. Writing $\phi(x,y)=(\phi_1(x,y),\phi_2(x,y))$ and since $h\circ \phi=\alpha \cdot h$, $\alpha=1$ implies $\phi_1(x,y)=x$, and $\alpha=-1$ implies $\phi_1(x,y)=-x$. Let us analyze both cases separately. If $\alpha=1$, since $\phi_1(x,y)=x$ we trivially get $\tilde{p}\circ \phi=\alpha\cdot \tilde{p}$. Therefore, \[h\circ \phi=(\widehat{f}-\tilde{p})\circ \phi=\widehat{f}\circ \phi-\tilde{p}\circ \phi=\alpha\cdot \widehat{f}-\alpha \cdot \tilde{p}=\alpha \cdot h.\]If $\alpha=-1$, since $\phi_1(x,y)=-x$ and $p\circ \phi=\alpha \cdot p$, we deduce that $p(-x)=-p(x)$, so $p(x)$ consists only of terms of odd degree. Therefore $\tilde{p}$ also consists only of terms of odd degree, and $\tilde{p}(-x)=-\tilde{p}(x)$ too, i.e. $\tilde{p}\circ \phi=\alpha \cdot \tilde{p}$. Then the result follows as before.
\end{proof}

Since $\widehat{f}$ depends on $y$, $h$ has degree at least 1 in $y$. If $\deg h>2$ then we are in the case $\deg\triangle^\ell f=n>2$. If $\deg h=2$ then since $h$ is harmonic, by Corollary \ref{corol-finite} $\sym(h)$ is finite, and we know that $\sym(f)\subset \sym(h)$. If $\deg h=1$ then $h\neq kx$, with $k$ a constant, because $h$ depends explicitly on $y$; thus, the symmetries of $f$ are among the finitely many, simultaneous symmetries of the conic $x\cdot h(x,y)=0$. We summarize the ideas for the case $\deg\triangle^\ell f=1$ in Algorithm 2. Here we also show how to proceed in the case when $\triangle^\ell f$ is a general linear polynomial $\xi(x,y)$, not necessarily $\xi(x,y)=x$. Notice that Step 4 of Algorithm 2 amounts to solving a linear system of equations in the coefficients of $\tilde{p}(x,y)$.

\begin{algorithm}[t!]
\begin{algorithmic}[1]
\REQUIRE A square-free polynomial $f(x,y)$ defining a real algebraic curve $\av{C}$, not a union of concurrent lines or a union of concentric circles, such that the case $\deg\triangle^\ell f=1$ occurs.
\ENSURE A polynomial $h$ such that $\sym(f)\subset \sym(h)$, which is either harmonic, or a conic, where $\sym(h)$ is finite.
\STATE{let $\xi(x,y)=\Delta^{\ell}f$ [notice that $\xi(x,y)$ is linear in $x,y$.]}
\STATE{let $k$ be the smallest number such that $\Delta^k f$ is the composition of a univariate function, and $\xi(x,y)$}
\STATE{let $\widehat{f}:=\Delta^{k-1}f$}
\STATE{let $\tilde{p}(x,y)$ be the polynomial of degree $\deg(\Delta^k f)+2$ such that: (i) $\Delta \tilde{p}=\Delta^k f$; (ii) $\tilde{p}(0,0)=0$; (iii) $\tilde{p}_x(0,0)=\tilde{p}_y(0,0)=0$}
\STATE{let $h:=\widehat{f}-\tilde{p}$ [notice that $h$ is harmonic.]}
\IF{$\deg(h)=1$}
\STATE{$h:=h\cdot \xi(x,y)$. [this polynomial is not necessarily harmonic, but defines a conic with finitely many symmetries, containing $\sym(f)$].}
\ENDIF
\end{algorithmic}
\caption{Case $\deg\triangle^\ell f=1$}\label{case-1}
\end{algorithm}

\begin{example}
Let $f(x,y)=\frac{1}{3}x^3-y+1$. Here $\triangle f=x$. Since $\tilde{p}(x)=\frac{1}{3}x^3$, we get $\widehat{f}=f-\tilde{p}=-y+1$. Therefore, the possible symmetries of $f$ are the simultaneous symmetries of $x=0$ and $y=1$. However, one can check that none of these are, in fact, symmetries of $f(x,y)=0$.
\end{example}

\vspace{0.3 cm}
\noindent\fbox{$\deg\triangle^\ell f=2$.} In this case, $\triangle^{\ell}f(x,y)=0$ is a conic section. If the conic is not a pair of parallel lines or a circle, we can easily find its axes of symmetry and the center of symmetry by using elementary, Linear Algebra methods. The case of two parallel lines (possibly a double line) can be handled as the case $\deg\triangle^\ell f=1$. However, if we get a circle we already know the possible center of rotation but we have no information about the directions of the symmetry axes. So let us address this case; the strategy is analogous to the case $\deg\triangle^\ell f=1$.

Without loss of generality we can assume that the center of the circle is the origin. Let $p=\triangle^k f$ be the first polynomial in the chain of laplacians starting with $f$, leading to concentric circles; therefore, $\widehat{f}=\triangle^{k-1} f$ is not a union of concentric circles.  Notice that $p(x,y)=\prod_i(x^2+y^2-\rho_i)$, so changing to polar coordinates we have $p=p(r)$, and in fact $p=p(r^2)$. Similarly to the case $\deg\triangle^\ell f=1$, we seek another polynomial $\tilde{p}(x,y)$ such that $\tilde{p}=\tilde{p}(r)$ and $\triangle\tilde{p}=p$. In order to do this, we use Eq. \eqref{eq:laplace polar}, taking into account that $\tilde{p}$ does not depend on $\theta$. Then we have
\[
\triangle \tilde{p}=\frac{1}{r}\cdot \left(r\cdot \tilde{p}''+\tilde{p}'\right)=\frac{1}{r}\cdot \frac{d}{dr}\left(r\cdot \tilde{p}'\right)=p,
\]
so we need to solve
\[
\frac{d}{dr}\left(r\cdot \tilde{p}'\right)=r\cdot p.
\]
Let $P(r)$ be a primitive of $r\cdot p$ such that $P(0)=0$; in fact, since $p=p(r^2)$, one has $P=P(r^2)$ too. Then $\tilde{p}$ is a primitive of $\frac{1}{r}\cdot P$. Notice that $\tilde{p}=\tilde{p}(r^2)$ as well, and that the integration constant in $\tilde{p}$ can be chosen so that $\tilde{p}(0)=\tilde{p}'(0)=0$.

Finally, let $h=\widehat{f}-\tilde{p}$. Then $\triangle h=\triangle \widehat{f}-\triangle \tilde{p}=p-p=0$.

\begin{proposition}\label{prop-r} Let $h=\widehat{f}-\tilde{p}$, where $\tilde{p}=\tilde{p}(r)$, defined as above. With the preceding assumptions, $\sym(f)\subset \sym(h)$.
\end{proposition}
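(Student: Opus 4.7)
The plan is to mirror the structure of the proof given for the linear case, but to exploit the rotational invariance available when $p$ defines a union of concentric circles. Pick $\phi \in \sym(f)$, so $f \circ \phi = \lambda f$ for some nonzero constant $\lambda$. Since $\phi$ is an isometry and the Laplacian commutes with isometries (both orthogonal linear parts and translations), iterating the relation $\triangle(f\circ\phi) = (\triangle f)\circ\phi$ yields $(\triangle^j f) \circ \phi = \lambda\,\triangle^j f$ for every $j$, with the \emph{same} constant $\lambda$ throughout the chain. In particular, $\widehat{f} \circ \phi = \lambda \widehat{f}$ and $p \circ \phi = \lambda p$.

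The first key step is to show that $\phi$ is linear orthogonal. Write $\phi(X) = AX + \overrightarrow{b}$ with $A \in \OO{2}$. By construction (after the preliminary translation that moves the common center of the family to the origin), we have $p(X) = \prod_i(\|X\|^2 - \rho_i)$. Any isometry preserving the zero-set of $p$ must preserve the common center of a nontrivial family of concentric circles, which is the origin; hence $\overrightarrow{b} = \overrightarrow{0}$ and $\phi(X) = AX$. Alternatively, a direct computation expanding $p(AX + \overrightarrow{b})$ produces linear-in-$X$ terms proportional to $\langle AX,\overrightarrow{b}\rangle$ that are absent from $\lambda p$, forcing $\overrightarrow{b} = \overrightarrow{0}$.

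The second key step is to pin down $\lambda$. Because $\phi$ is orthogonal, it preserves $r = \|X\|$, and since $p$ is a polynomial in $r^2$, we have $p \circ \phi = p$. Comparing with $p \circ \phi = \lambda p$, and using $p \not\equiv 0$, gives $\lambda = 1$, whence $\widehat{f} \circ \phi = \widehat{f}$. By the same token, $\tilde{p}$ is radial, so $\tilde{p} \circ \phi = \tilde{p}$, and therefore
\[
h \circ \phi \;=\; \widehat{f}\circ\phi - \tilde{p}\circ\phi \;=\; \widehat{f} - \tilde{p} \;=\; h,
\]
so $\phi \in \sym(h)$, as required. The main (mild) obstacle is the step forcing $\overrightarrow{b} = \overrightarrow{0}$; this is an elementary geometric fact, but it is essential, since it is only after $\phi$ has been identified as orthogonal that both the determination $\lambda = 1$ and the invariance of the radial polynomial $\tilde{p}$ become automatic.
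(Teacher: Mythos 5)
Your proof is correct, and it is in fact more complete than the argument the paper gives, while resting on the same final mechanism (radial polynomials are invariant under origin-fixing orthogonal maps, so the subtraction $h=\widehat{f}-\tilde{p}$ inherits the symmetry). The paper reduces to $\sym(\widehat{f})\subset\sym(h)$ and then asserts $\widehat{f}=\widehat{f}(x^2+y^2)$, i.e.\ treats $\widehat{f}$ itself as radial; this sits oddly with the construction, where $\widehat{f}$ is expressly chosen \emph{not} to be a union of concentric circles (the radial object is $p=\triangle\widehat{f}$), and it silently presumes that the symmetry is already an origin-fixing orthogonal map. You instead work with $\sym(f)$ directly: you propagate $f\circ\phi=\lambda f$ down the Laplacian chain with the same $\lambda$, use the radial polynomial $p=\prod_i(x^2+y^2-\rho_i)$ to kill the translational part of $\phi$ (either via the common center, or more robustly by expanding $p(AX+\overrightarrow{b})$ and noting the odd-degree terms proportional to $\langle AX,\overrightarrow{b}\rangle$ have no counterpart in $\lambda p$), deduce $\lambda=1$ from $p\circ\phi=p$, and only then invoke radial invariance of $\tilde{p}$. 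This supplies precisely the step the paper glosses over, namely why $\phi$ fixes the origin, and also fixes the sign of $\lambda$. One small caution: the purely geometric version of the center argument is delicate when the real zero set of $p$ is empty or degenerate (all $\rho_i\le 0$), but your algebraic alternative, comparing $p\circ\phi=\lambda p$ as polynomials, covers all cases, so the proof stands.
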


\begin{proof} It suffices to see that $\sym(\widehat{f})\subset \sym(h)$. Let $\phi$ be a symmetry of $\widehat{f}$. Since we are assuming that the center of all the irreducible components of the curve defined by $\widehat{f}$ is the origin, we have $\widehat{f}=\widehat{f}(x^2+y^2)$. Since any orthogonal transformation leaves the form $x^2+y^2$ invariant, we have $\widehat{f}\circ \phi=\widehat{f}$. Additionally, since $\tilde{p}=\tilde{p}(r^2)=\tilde{p}(x^2+y^2)$, we also have $\tilde{p}\circ \phi=\tilde{p}$. Therefore,
\[
h\circ \phi=(\widehat{f}-\tilde{p})\circ \phi=\widehat{f}\circ \phi-\tilde{p}\circ \phi=\widehat{f}-\tilde{p}=h.
\]
\end{proof}

As in the case $\deg\triangle^\ell f=1$, either $\deg h>1$, in which case $\sym(h)$ is finite, or $\deg h=1$. However, in this last case the symmetries of $f$ are among the simultaneous symmetries of the circle  $\triangle^{\ell}f(x,y)=0$ and $h(x,y)=0$, whose computation is straightforward. We summarize the main ideas of the case $\deg\triangle^\ell f=2$ in Algorithm 3.

\begin{algorithm}[t!]
\begin{algorithmic}[1]
\REQUIRE A square-free polynomial $f(x,y)$ defining a real algebraic curve $\av{C}$, not a union of concurrent lines or a union of concentric circles, such that the case $\deg\triangle^\ell f=2$ occurs.
\ENSURE A polynomial $h$ such that $\sym(f)\subset \sym(h)$ which is either harmonic, or a conic, or the product of a line and a conic, where $\sym(h)$ and is finite.
\IF{$\Delta^\ell f$ does not correspond to either a circle, or a double line, or two parallel lines}
\STATE{$h:=\Delta^{\ell} f$ [this polynomial can be harmonic, or not]}
\ENDIF
\IF{$\Delta^{\ell} f=a(\xi(x,y)+c_1)(\xi(x,y)+c_2)$, with $a,c_1,c_2\in {\Bbb R}$}
\STATE{proceed as in the case $\deg(\Delta^\ell f)=1$ to compute $h$}
\ENDIF
\IF{$\Delta^{\ell} f$ defines a circle $C$}
\STATE{apply a translation $\tau$ such that the center of $C$ is mapped to $(0,0)$; let $f:=f\circ \tau^{-1}$}
\STATE{let $k$ be the smallest number such that $\Delta^k f=p(x^2+y^2)$ is a union of circles centered at $(0,0)$.}
\STATE{let $\widehat{f}:=\Delta^{k-1}f$}
\STATE{let $\tilde{p}(r)$ be the polynomial satisfying: (i) $\tilde{p}=\tilde{p}(r)$; (ii) $\Delta \tilde{p}=p$ [where $\Delta p$ is computed in polar coordinates]; (iii) $p(r)=p'(r)=0$ [observe that in fact $\tilde{p}=\tilde{p}(r^2)=\tilde{p}(x^2+y^2)$]}
\STATE{let $h:=\widehat{f}-\tilde{p}$ [notice that $h$ is harmonic.]}
\IF{$\deg(h)=1$}
\STATE{$h:=h\cdot \Delta^{\ell} f$ [this polynomial can be harmonic, or not]}
\ENDIF
\ENDIF
\end{algorithmic}
\caption{Case $\deg\triangle^\ell f=2$}\label{case-2}
\end{algorithm}

Finally, the main steps to find the symmetries of a given curve $f(x,y)=0$ by using the ideas in the section are summarized in the algorithm {\tt Symm-General}.

\begin{algorithm}[t!]
\begin{algorithmic}[1]
\REQUIRE A square-free polynomial $f(x,y)$ defining a real algebraic curve $\av{C}$, not a union of concurrent lines or a union of concentric circles.
\ENSURE The symmetries of $\av{C}$.
\FOR{$s=1$ to $\lceil \mbox{deg}(\av{C})/2 \rceil $}
\STATE{compute $\Delta^s f$}
\IF{$\deg(\Delta^s f)=0$}
\STATE{$\ell:=s-1$; {\bf break}}
\ENDIF
\ENDFOR
\IF{$\deg(\Delta^\ell f)>2$}
\STATE{$h:=\Delta^\ell f$}
\ENDIF
\IF{$\deg(\Delta^\ell f)=1$}
\STATE{find the polynomial $h$ using Algorithm 2}
\ENDIF
\IF{$\deg(\Delta^\ell f)=2$}
\STATE{find the polynomial $h$ using Algorithm 3}
\ENDIF
\STATE{compute the symmetries of $h$}
\FOR{each symmetry $\phi$ found in the previous step:}
\STATE{check whether $f\circ \phi=\pm f$}
\ENDFOR
\STATE{{\bf return} the symmetries of $f$, or the message {\tt No symmetries found}.}
\end{algorithmic}
\caption{{\tt Symm-General}}\label{alg-syms-general}
\end{algorithm}

In particular, throughout the section we have proven the following result.

\begin{corollary}\label{th-realize-2}
Let  ${\mathcal C}$ be an algebraic curve with a non-trivial, finite group of symmetry $\sym(\av{C})$, and let $f(x,y)$ be a square-free polynomial implicitly defining ${\mathcal C}$, with coefficients in a field ${\Bbb K}\supset {\Bbb Q}$. Then the coordinates of the center of symmetry, if any, are elements of ${\Bbb K}$.
\end{corollary}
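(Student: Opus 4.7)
The plan is to trace the computation of the center of symmetry through the reduction of Section \ref{sec-symmetries}, checking that every ingredient stays in $\K$. The initial observation is that $\triangle$ is a $\Q$-linear combination of second partial derivatives, so every iterate $\triangle^s f$ belongs to $\K[x,y]$; hence the three branches in Algorithm \ref{alg-syms-general} all start from a polynomial with coefficients in $\K$.

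In the harmonic case, $\deg\triangle^\ell f>2$, Theorem \ref{thm:barycenter} gives the center as $\f p=-a_{n-2}/((n-1)a_{n-1})$, where the $a_j$ are coefficients of the polynomial $g(z)$ associated with $h=\triangle^\ell f$. Since $\partial_x h,\partial_y h\in\K[x,y]$ and the substitution \eqref{eq:subst} has coefficients in $\Q[\I]$, we obtain $g(z)\in\K[\I][z]$, hence $a_{n-1},a_{n-2}\in\K[\I]$ and $\f p\in\K[\I]$. Because $\f p$ corresponds to a point in $\R^2$, writing $\f p=p_1+\I p_2$ with $p_1,p_2\in\R$ forces $p_1,p_2\in\K$, using that $\{1,\I\}$ is a $\K$-basis of $\K[\I]$.

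For the cases $\deg\triangle^\ell f\in\{1,2\}$ I would follow Algorithms \ref{case-1} and \ref{case-2} step by step, showing that each operation preserves the coefficient field. The construction of $\tilde p$ amounts to solving a $\K$-linear system for its coefficients (antiderivatives with vanishing constants of integration merely multiply coefficients by rationals), so $\tilde p\in\K[x,y]$ and therefore $h=\widehat f-\tilde p\in\K[x,y]$. In the degree-two case, if $\triangle^\ell f$ is a non-circular, non-parallel-line conic then its center is the unique solution of the $\K$-linear system $\nabla(\triangle^\ell f)=\f 0$; if it is a circle, its center is found by completing squares over $\K$, and the polar construction of $\tilde p(r)$ only multiplies coefficients by rationals, hence preserves $\K[x^2+y^2]$. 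In every subcase the construction either loops back to the harmonic case (handled above) or computes the center by elementary linear algebra over $\K$.

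The main subtlety, modest but worth isolating, is the $\K[\I]$-passage in the harmonic step: because $\K\subset\R$, $-1$ is not a square in $\K$, so $\K[\I]$ is a degree-two extension of $\K$ with basis $\{1,\I\}$; this is exactly what legitimizes reading off the real and imaginary parts of $\f p$ as elements of $\K$ and concludes the proof.
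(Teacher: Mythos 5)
Your proposal is correct and follows essentially the same route as the paper, which states the corollary as a by-product of the Section \ref{sec-symmetries} constructions: you simply verify that the Laplacian iterates, the associated polynomial $g(z)$ and the barycenter formula \eqref{eq:center harmonic}, and the linear-algebra/antiderivative steps of Algorithms \ref{case-1} and \ref{case-2} all preserve the coefficient field, with the real/imaginary-part extraction over $\K[\I]$ handled correctly since $\K\subset\R$. This is exactly the argument the paper intends, made explicit.
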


\section{Computation of the similarities.}\label{sec-similarities}

Let ${\mathcal C}_1,{\mathcal C}_2$ be two algebraic planar curves, defined by square-free polynomials $f_1,f_2$, of the same degree $n$; notice that if the degrees of $f_1,f_2$ are different, the corresponding curves cannot be similar. Furthermore, we assume that ${\mathcal C}_1,{\mathcal C}_2$ are not both unions of parallel lines or of concentric circles; therefore there are just finitely many similarities relating them (see Theorem 3 in \cite{AHM14b}). In order to check whether or not ${\mathcal C}_1,{\mathcal C}_2$ are similar, and to compute the similarities relating them in the affirmative case, we will follow a strategy analogous to the preceding section. Thus, we start assuming that $f_1,f_2$ are harmonic polynomials, and then we show how to reduce any other case, to this case. As in the preceding section, our method computes finitely many similarities, that can be tested afterwards by using Eq. \eqref{eq:test of similarity}.

\subsection{Similarities of harmonic polynomials}\label{subsec-harmonic-sims}

Let $f_1,f_2$ be harmonic of degree $n\geq 3$ such that ${\mathcal C}_1,{\mathcal C}_2$ are related by a similarity $\phi$; the cases $n=1$ and $n=2$ can be solved by elementary methods and are briefly addressed later. Let $\overrightarrow{v}_1(x,y)=(\partial_x f_1,-\partial_y f_1)$, and $\overrightarrow{v}_2(x,y)=(\partial_x f_2,-\partial_y f_2)$. By Lemma \ref{lem-recognize-2}, we have $f_1\circ \phi=\lambda  f_2$, and therefore $\phi$ maps the {real} singular points of the vector field $\overrightarrow{v}_1$ onto the {real} singular points of the vector field $\overrightarrow{v}_2$, and conversely. Let
\[
g_1(z)=b_{n-1}z^{n-1}+b_{n-2}z^{n-2}+\cdots +b_0,\mbox{ }g_2(z)=c_{n-1}z^{n-1}+c_{n-2}z^{n-2}+\cdots+c_0,
\]
with $b_{n-1}\neq 0,c_{n-1}\neq 0$, be the complex polynomials associated with $f_1,f_2$ as in Eq. \eqref{assoc-g}; then $\phi$ maps roots of $g_1(z)$ to roots of $g_2(z)$, and conversely. As we observed in Subsection \ref{subsec-simil}, we can identify $\phi$ with a linear complex transformation $\phi(z)=\alpha z+\beta$ or $\phi(z)=\alpha\overline{z}+\beta$, with $\alpha\neq 0$. Assume first that $\phi(z)=\alpha z+\beta$. Then we have
\begin{equation}\label{cond-sim}
g_1(\alpha z+\beta)=\lambda g_2(z)
\end{equation}
for some nonzero $\lambda \in {\Bbb C}$. The condition in Eq. \eqref{cond-sim} gives rise to a triangular system with $n$ equations
\begin{equation}\label{system}
\left\{\begin{array}{lcr}
b_{n-1}\alpha^{n-1} & = & \lambda c_{n-1},\\
b_{n-1}\alpha^{n-2}(n-1)\beta + b_{n-2}\alpha^{n-2} & = & \lambda c_{n-2},\\
\cdots & = & \cdots
\end{array}\right.
\end{equation}

From the first two equations and since $\alpha\neq 0$ we have
\begin{equation}\label{variables}
\lambda=\frac{b_{n-1}\alpha^{n-1}}{c_{n-1}},\mbox{ }\beta=\frac{1}{(n-1)b_{n-1}}\cdot\left(\frac{b_{n-1}c_{n-2}}{c_{n-1}}\alpha-b_{n-2}\right).
\end{equation}
Plugging these expressions in the remaining $n-2$ equations, we obtain $n-2$ polynomials in the variable $\alpha$, with complex coefficients. Let $M(\alpha)$ be the gcd of these (univariate, complex) polynomials; such a gcd can be fastly and efficiently computed, for instance, using the computer algebra system {\tt Maple} 18. Then the following result holds.

\begin{proposition} Let $f_1,f_2$ be two harmonic polynomials of degree $n\geq 3$, and let ${\mathcal C}_1,{\mathcal C}_2$ be the planar algebraic curves defined by $f_1,f_2$. The direct similarities relating ${\mathcal C}_1,{\mathcal C}_2$ are $\phi(z)=\alpha_i z+\beta_i$, where $\alpha_i$ is a nonzero root of $M(\alpha)$, and $\beta_i$ corresponds to Eq. \eqref{variables}. In particular, ${\mathcal C}_1,{\mathcal C}_2$ are related by a direct similarity iff $M(\alpha)$ has some nonzero root.
\end{proposition}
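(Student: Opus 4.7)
The plan is to unpack the polynomial identity \eqref{cond-sim}, which has already been established in the paragraph preceding the proposition from the fact that $\phi$ maps the roots of $g_1$ bijectively (with multiplicities) onto the roots of $g_2$. The proof is essentially a matter of matching coefficients and reading off what this tells us about the triples $(\alpha,\beta,\lambda)$.

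First I would expand $g_1(\alpha z+\beta)$ by the binomial theorem, obtaining a polynomial in $z$ of degree $n-1$ whose $i$-th coefficient is an explicit polynomial in $\alpha,\beta$ (with the $b_j$'s as parameters). Equating coefficients of $z^{n-1},z^{n-2},\dots,z^0$ on the two sides of $g_1(\alpha z+\beta)=\lambda g_2(z)$ yields exactly the triangular system \eqref{system}. The top equation is $b_{n-1}\alpha^{n-1}=\lambda c_{n-1}$; since $b_{n-1},c_{n-1}\neq 0$ and since any direct similarity is invertible so $\alpha\neq 0$, this determines $\lambda$ uniquely as $\lambda=b_{n-1}\alpha^{n-1}/c_{n-1}$. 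Substituting this value of $\lambda$ into the second equation makes it linear in $\beta$, and solving produces exactly the expression for $\beta$ displayed in \eqref{variables}.

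Next I would substitute the now-explicit formulas for $\lambda$ and $\beta$ (as functions of $\alpha$) into each of the remaining $n-2$ equations of \eqref{system}. Each one becomes a single univariate polynomial equation in $\alpha$ with coefficients in $\mathbb{C}$, call them $P_1(\alpha),\dots,P_{n-2}(\alpha)$. By construction, the identity \eqref{cond-sim} holds for a given $\alpha\neq 0$ if and only if all $P_j(\alpha)=0$, and this happens if and only if $M(\alpha):=\gcd(P_1,\dots,P_{n-2})$ vanishes at $\alpha$. Since every direct similarity of the form $\phi(z)=\alpha z+\beta$ relating $\mathcal{C}_1$ and $\mathcal{C}_2$ must satisfy \eqref{cond-sim}, its scaling/rotation parameter $\alpha$ must be a nonzero root of $M(\alpha)$, and the translation $\beta$ is then forced by \eqref{variables}. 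Conversely, a nonzero root of $M(\alpha)$ gives a candidate pair $(\alpha_i,\beta_i)$, and the induced map $\phi(z)=\alpha_i z+\beta_i$ can be checked by means of Eq. \eqref{eq:test of similarity}; this is the characterization of similarities announced in the statement.

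The main subtlety, in my view, is not the forward passage from similarity to the system (which is formal) but the converse: verifying that a common root of the $P_j$ really does produce a similarity relating the curves, not merely a relation between the associated complex polynomials $g_1,g_2$. The point is that once $g_1(\alpha z+\beta)=\lambda g_2(z)$ holds as a polynomial identity, integration combined with Lemma~\ref{G-z} reconstructs $f_1\circ\phi$ as a real multiple of $f_2$ up to a real additive constant, and the latter constant must be tested (this is why Eq. \eqref{eq:test of similarity} is invoked as a final check, rather than the existence of a root of $M(\alpha)$ being the end of the story by itself). All the rest of the argument is coefficient comparison, linear algebra in the first two equations, and the defining property of the gcd.
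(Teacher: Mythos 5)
Your proposal follows essentially the same route as the paper, which in fact states this proposition without a separate proof, as a direct readout of the derivation preceding it: expanding \eqref{cond-sim}, solving the first two equations of the triangular system \eqref{system} for $\lambda$ and $\beta$ as in \eqref{variables}, substituting into the remaining $n-2$ equations, and taking their gcd $M(\alpha)$. Your caution about the converse is well placed and is precisely the point the paper glosses over: a nonzero root of $M$ only yields $g_1(\alpha z+\beta)=\lambda g_2(z)$ with $\lambda\in\C$, and integrating (Lemma~\ref{G-z}, with $G_2$ a primitive of $g_2$) gives $f_1\circ\phi=\re\left(\alpha\lambda\, G_2\right)+\mathrm{const}$, which when $\alpha\lambda\notin\R$ can mix in the harmonic conjugate of $f_2$ and so fail to be a real multiple of $f_2$ --- a slightly bigger obstruction than the spurious additive constant you mention --- so the final verification via \eqref{eq:test of similarity} is indeed indispensable, and the literal ``iff'' in the statement should be read, as you do, as producing finitely many candidates that must then be tested.
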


For opposite similarities, we have $\phi(z)=\alpha\overline{z}+\beta$. Let ${\bf g}_2(z)$ be the polynomial whose coefficients are the conjugates of the coefficients of $g_2(z)$. Then
\begin{equation}\label{cond-sim2}
g_1(\alpha z+\beta)=\lambda {\bf g}_2(z),
\end{equation}
and we can proceed as in the case of direct similarities. The whole procedure, both for direct and opposite similarities, is given in Algorithm 5.

\begin{algorithm}[t!]
\begin{algorithmic}[1]
\REQUIRE Two harmonic polynomials $f_1(x,y),f_2(x,y)$ of the same degree.
\ENSURE The similarities between the curves $\av{C}_1,\av{C}_2$ defined by $f_1,f_2$.
\IF{$\deg(f_1)=\deg(f_2)\leq 2$}
\STATE{compute the similarities, if any, by elementary methods; and {\bf return} either the similarities found, or the message {\tt The curves are not similar}}
\ENDIF
\STATE{compute the polynomials $g_1(z),g_2(z)$}
\STATE{compute the system Eq. \eqref{system} in $\lambda,\alpha,\beta$ from Eq. \eqref{cond-sim} [direct similarities]}
\STATE{check whether the system has any solutions $(\lambda,\alpha,\beta)$ with $\alpha\neq 0$}
\STATE{{\bf return} the direct similarities $\phi(z)=\alpha z +\beta$ found, or the message {\tt No direct similarities found}}
\STATE{compute the system in $\lambda,\alpha,\beta$ derived from Eq. \eqref{cond-sim2} [opposite similarities]}
\STATE{check whether the system has any solutions $(\lambda,\alpha,\beta)$ with $\alpha\neq 0$}
\STATE{{\bf return} the direct similarities $\phi(z)=\alpha \overline{z} +\beta$ found, or the message {\tt No opposite similarities found}}
\end{algorithmic}
\caption{{\tt Similarities Harmonic}}\label{sim-case-1}
\end{algorithm}

If $f_1,f_2$ have degree one, the infinitely many similarities between ${\mathcal C}_1$ and ${\mathcal C}_2$ can be written as
\[
\phi_{\gamma, \delta}=\gamma e^{\I \theta}z+{\bf v}+\delta {\bf w},\mbox{ or }\phi_{\gamma, \delta}=\gamma e^{\I \theta}\overline{z}+{\bf v}+\delta {\bf w},
\]
where $\gamma,\delta\in {\Bbb R}$, $\theta$ is the angle between ${\mathcal C}_1$ and ${\mathcal C}_2$, ${\bf v}$ is any vector connecting two points of ${\mathcal C}_1$ and ${\mathcal C}_2$, and ${\bf w}$ represents a vector parallel to ${\mathcal C}_2$. If $f_1,f_2$ have degree two, we can derive the nature of ${\mathcal C}_1,{\mathcal C}_2$ from its matrix representations, and then write their canonical equations. Then ${\mathcal C}_1,{\mathcal C}_2$ are similar iff the canonical equations are multiples of each other. Furthermore, in the affirmative case, in order to compute the similarities between ${\mathcal C}_1,{\mathcal C}_2$ we compute the affine transformation mapping the coordinate systems where the canonical representations are written; composing this mapping with the symmetries of ${\mathcal C}_1,{\mathcal C}_2$, all the similarities between ${\mathcal C}_1,{\mathcal C}_2$ are derived (see Proposition 2 in \cite{AHM14b}). Notice that this strategy works for conics in general, regardless of whether or not they are harmonic.

\begin{example}
Let $f_1(x,y)$ be the polynomial defining the sextic in Example \ref{sextic}, and let $f_2(x,y)$ be the result of applying the transformation
\[
x:=-y+2,\mbox{ }y:=x+1.
\]
As we saw in Example \ref{sextic}, $g_1(z)=6z^5-6z^2$. Furthermore, \[g_2(z)=234z+660z^2+60\I z^4+240\I z^3-114\I z^2-708\I z+180 z^3-30 z^4-6 z^5-222-246\I.\]For direct similarities, after imposing the condition in Eq. \eqref{cond-sim} the resulting system has three solutions with $\alpha\neq 0$, where $\alpha$ is any cubic root of $-\I$, and $\beta=(1-2\I)\alpha $. Hence, we get three direct similarities; notice that getting more than one direct similarity is expectable, since the curves have direct symmetries. As for opposite similarities, the system derived from Eq. \eqref{cond-sim2} has three solutions with $\alpha\neq 0$, where $\alpha$ is any cubic root of $\I$, and $\beta=(1+2\I)\alpha$. Again, getting more than one direct similarity is expectable, since the curves have opposite symmetries.
\end{example}

\begin{remark}\label{efficiency-5} About the efficiency of Algorithm 5, the fact that the system to be solved is triangular guarantees that Algorithm 5 works very fast. For instance, checking whether two algebraic curves ${\mathcal C}_1$ and ${\mathcal C}_2$, of degree 20, with coefficients of order $2^{\alpha_1}$ and $2^{\alpha_2}$, $\alpha_1=27$ and $\alpha_2=52$, where ${\mathcal C}_2$ is built from ${\mathcal C}_1$ by applying a similarity, are similar, takes 0.436 seconds using Maple 18 on a personal laptop with a 2.90 GHz processor, and 8.00 Gbytes of RAM memory. The computation includes finding all the similarities between the curves.
\end{remark}

\subsection{Reduction to the harmonic case.}

Now let $f_1,f_2$ be square-free polynomials, not necessarily harmonic, defining curves $\av{C}_1,\av{C}_2$ related by some (unknown) similarity $\phi$. In order to find $\phi$, we consider the double laplacian chain
\begin{equation}\label{eq:laplacianchain2}
\begin{array}{l}
  f_1\longmapsto\triangle f_1\longmapsto\triangle^2 f_1\longmapsto\cdots\longmapsto\triangle^\ell f_1\longmapsto c_1\\
	f_2\longmapsto\triangle f_2\longmapsto\triangle^2 f_2\longmapsto\cdots\longmapsto\triangle^\ell f_2\longmapsto c_2,
\end{array}
\end{equation}
where $c_1,c_2\in {\Bbb R}$. Furthermore, by Theorem \ref{thm:symmetry preservation} we have the following chain of similarity groups,
\[\simm(f_1,f_2)\subset \simm(\triangle f_1,\triangle f_2)\subset \cdots \subset \simm(\triangle^\ell f_1,\triangle^\ell f_2)\subset \simm_2.\]
We also need the following result, which follows from Lemma \ref{Laplacian-similarities}.

\begin{lemma} \label{lambdas}
Let $f_1,f_2$ satisfy $f_1\circ \phi=\lambda f_2$, where $\phi\in \simm_n$ with scaling constant $\mu$. Then for $k\in {\Bbb N}$ we have
\begin{equation}\label{laplacian-k}
\triangle ^k f_1\circ \phi=\lambda_k \cdot \triangle^k f_2,
\end{equation}
where $\lambda_k=\frac{\lambda}{\mu^{2k}}$. In particular,
\begin{equation}\label{recursion}
\lambda_k=\frac{1}{\mu^2}\cdot \lambda_{k-1}.
\end{equation}
\end{lemma}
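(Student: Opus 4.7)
The plan is a straightforward induction on $k$, with Lemma \ref{Laplacian-similarities} as the only real ingredient; no hard step is anticipated.

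For the base case $k=0$, the hypothesis $f_1\circ\phi=\lambda f_2$ is exactly the statement, with $\lambda_0=\lambda=\lambda/\mu^{0}$. For the inductive step, I would assume $\triangle^{k-1}f_1\circ\phi=\lambda_{k-1}\cdot\triangle^{k-1}f_2$ and apply the Laplacian to both sides. Linearity of $\triangle$ handles the right-hand side, yielding $\lambda_{k-1}\cdot\triangle^{k}f_2$. For the left-hand side, I would invoke Lemma \ref{Laplacian-similarities} with the polynomial $\triangle^{k-1}f_1$ in place of $f$, obtaining
\begin{equation*}
\triangle\bigl(\triangle^{k-1}f_1\circ\phi\bigr)=\mu^{2}\cdot\bigl(\triangle^{k}f_1\circ\phi\bigr).
\end{equation*}
Equating the two sides and dividing by $\mu^{2}$ (which is legitimate since $\mu\neq 0$ for any similarity) gives
\begin{equation*}
\triangle^{k}f_1\circ\phi=\frac{\lambda_{k-1}}{\mu^{2}}\cdot\triangle^{k}f_2,
\end{equation*}
which proves the recursion \eqref{recursion} and, unravelled, the closed form $\lambda_k=\lambda/\mu^{2k}$.

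The only subtlety worth flagging is that Lemma \ref{Laplacian-similarities} is stated for an arbitrary polynomial, so it applies verbatim to $\triangle^{k-1}f_1$; no further justification is needed. Thus the entire argument reduces to one application of Lemma \ref{Laplacian-similarities} per inductive step, and there is no genuine obstacle.
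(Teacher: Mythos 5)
Your induction is correct and is exactly the argument the paper intends: the paper offers no written proof beyond stating that the lemma ``follows from Lemma \ref{Laplacian-similarities}'', and your proposal simply spells out that iteration (apply $\triangle$ to $\triangle^{k-1}f_1\circ\phi=\lambda_{k-1}\triangle^{k-1}f_2$, use $\triangle(g\circ\phi)=\mu^2(\triangle g\circ\phi)$ with $g=\triangle^{k-1}f_1$, and divide by $\mu^2\neq 0$). Nothing further is needed.
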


Now we distinguish the following cases depending on the degree of $\triangle^\ell f_1, \triangle^\ell f_2$.

\vspace{0.3 cm}

\noindent\fbox{$\deg\triangle^\ell f_1,\triangle^\ell f_2=n>2$.} In this case the $\triangle^\ell f_i$, $i=1,2$ are harmonic and we can use previous results to calculate $\simm(\triangle^\ell f_1,\triangle^\ell f_2)$.

\vspace{0.3 cm}
\noindent\fbox{$\deg\triangle^\ell f_1,\triangle^\ell f_2=1$.} The strategy is analogous to the symmetries case. Assume that $\Delta^\ell f_1=x$, and $\Delta^\ell f_2=ax+by+c$. Since $\Delta^\ell f_1\circ \phi=\lambda_\ell \cdot \Delta^\ell f_2$, calling $\phi(x,y)=(\phi_1(x,y),\phi_2(x,y))$, we have $\phi_1(x,y)=\lambda_\ell \cdot(ax+by+c)$. Now reasoning as in Subsection \ref{red-symmetries}, we define $k$ as the smallest natural number such that $\triangle^k f_1=p_1(x)$ only depends on $x$, and we denote $\widehat{f}_1=\triangle^{k-1}f_1$, so $\Delta \widehat{f}_1=p_1$. Also, let $\tilde{p}_1(x)$ satisfy $\tilde{p}''_1(x)=\Delta \tilde{p}_1=p_1$, and $\tilde{p}''_1(0)=\tilde{p}1'(0)=0$. Hence, $h_1=\widehat{f}_1-\tilde{p}_1$ is harmonic.

Additionally, since for $r=0,\ldots,\ell$ we have $\Delta^r f_1\circ \phi=\lambda_r\cdot \Delta^r f_2$, $k$ is also the smallest natural number such that $\triangle^k f_2$ only depends on $\phi_1(x,y)$, i.e. such that $\triangle^k f_2=p_2(x,y)$, where $p_2=\omega(\phi_1(x,y))=\omega\circ \phi$, with $\omega=\omega(x)$. Now let $\widehat{f}_2=\triangle^{k-1}f_2$, so that $\triangle \widehat{f}_2=p_2$, and let  $\tilde{\omega}(x)$ be such that $\tilde{\omega}''(x)=\Delta \tilde{\omega}=\omega(x)$, $\tilde{\omega}'(0)=\tilde{\omega}(0)=0$. Since $\Delta \tilde{\omega}=\omega$, by Lemma \ref{lambdas} we have $\Delta(\tilde{\omega} \circ \phi)=\mu^2\cdot (\tilde{\omega} \circ \phi)$. Therefore, denoting $\tilde{p}_2=\frac{1}{\mu^2}(\tilde{\omega} \circ \phi)$, we have \[\Delta \tilde{p}_2=\frac{1}{\mu^2}\cdot \Delta (\tilde{\omega}\circ \phi)=\frac{1}{\mu^2}\cdot \mu^2\cdot (\Delta \tilde{\omega} \circ \phi)=\Delta\tilde{\omega}\circ \phi=\omega\circ \phi=p_2\]

Since by construction $\Delta \widehat{f}_2=p_2$, we get that
$h_2=\widehat{f}_2-\tilde{p}_2$ is a harmonic function.

\begin{remark} \label{rem-computation} From a computational point of view, we do not need to know the scaling constant $\mu$ in order to compute $\tilde{p}_2$ or $h_2$. Instead, we directly find $\tilde{p}_2$ imposing the following two conditions: (1) $\tilde{p}_2$ is a polynomial of degree equal to $\mbox{deg}(p_2)+2$; (2) $\tilde{p}_2$ is the composition of a univariate polynomial $\gamma(x)$ (of degree $\mbox{deg}(p_2)+2$) with $\phi_1(x,y)$ (which is known); (3) $\Delta \tilde{p}_2=p_2$. These conditions lead to a linear system of equations whose unknowns are the coefficients of $\gamma(x)$.
\end{remark}

Now let $p_1$ be as in Eq. \eqref{exp-p}, and $\tilde{p}_1$ as in Eq. \eqref{p-tilde}, and let $q=\ell-k$. Since by definition $p_1=\triangle^k f_1$ and $p_2=\triangle^k f_2$, from Lemma \eqref{lambdas} we have $p_1\circ \phi=\lambda_k\cdot p_2$. Then we have
\[
p_2(x,y)=\frac{1}{\lambda_k}\cdot \left[\phi_1(x,y)^{2q+1}+a_{2q}\phi_1(x,y)^{2q}+\cdots\right].
\]
Since $p_2=\omega(\phi_1(x,y))$, we observe that $\omega(x)=\frac{1}{\lambda_k}\cdot p_1(x)$. Therefore, $\tilde{\omega}(x)=\frac{1}{\lambda_k}\cdot \tilde{p}_1(x)$. Moreover, by definition $\tilde{p}_2=\frac{1}{\mu^2}\cdot (\tilde{\omega}\circ \phi)$, so $\tilde{p}_2=\frac{1}{\mu^2\lambda_k}\cdot (\tilde{p}_1\circ \phi)$. Since using Eq. \eqref{recursion} we get $\mu^2\lambda_k=\lambda_{k-1}$, we finally obtain $\tilde{p}_1\circ \phi=\lambda_{k-1}\cdot\tilde{p}_2$.

\begin{proposition} Let $h_i=\widehat{f}_i-\tilde{p}_i$, with $i=1,2$, where $\widehat{f}_i$ and $\tilde{p}_i$ are defined as before. Then $\simm(f_1,f_2)\subset \simm(\tilde{f}_1,\tilde{f}_2)$.
\end{proposition}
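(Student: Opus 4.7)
The plan is to follow the same line of reasoning used in Proposition~\ref{prop-r} (the analogous statement for symmetries), but now keeping track of the scaling constants produced by Lemma~\ref{lambdas} rather than the constants $\pm 1$ that appeared in the symmetry case. Concretely, I want to show that an arbitrary $\phi\in\simm(f_1,f_2)$ satisfies $h_1\circ\phi=\lambda_{k-1}\cdot h_2$, which by definition means $\phi\in\simm(h_1,h_2)$.

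First I would observe that, since $\widehat{f}_i=\triangle^{k-1}f_i$ for $i=1,2$, Lemma~\ref{lambdas} applied with exponent $k-1$ gives
\begin{equation*}
\widehat{f}_1\circ\phi=\lambda_{k-1}\cdot \widehat{f}_2.
\end{equation*}
Next I would invoke the identity proved in the paragraph immediately preceding the statement, namely $\tilde{p}_1\circ\phi=\lambda_{k-1}\cdot\tilde{p}_2$. This is the only nontrivial ingredient: it depends on the explicit description of $\tilde{p}_2$ as $(1/\mu^2)(\tilde{\omega}\circ\phi)$ together with the identifications $\omega(x)=(1/\lambda_k)p_1(x)$ and $\mu^2\lambda_k=\lambda_{k-1}$ coming from the recursion in Eq.~\eqref{recursion}; these have already been assembled, so at this stage they can simply be quoted.

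With these two equalities in hand, the rest is a one-line computation:
\begin{equation*}
h_1\circ\phi=(\widehat{f}_1-\tilde{p}_1)\circ\phi=\widehat{f}_1\circ\phi-\tilde{p}_1\circ\phi=\lambda_{k-1}\widehat{f}_2-\lambda_{k-1}\tilde{p}_2=\lambda_{k-1}(\widehat{f}_2-\tilde{p}_2)=\lambda_{k-1}\cdot h_2.
\end{equation*}
Since $\lambda_{k-1}\neq 0$ (it equals $\lambda/\mu^{2(k-1)}$ with $\lambda,\mu\neq 0$), this shows $\phi\in\simm(h_1,h_2)$, and the inclusion $\simm(f_1,f_2)\subset\simm(h_1,h_2)$ follows.

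The main potential obstacle is not really a conceptual one but a bookkeeping one: ensuring that the same proportionality constant appears in front of both $\widehat{f}_2$ and $\tilde{p}_2$ so that the subtraction actually factors. This is precisely why the preceding derivation chased the constants through $\omega$, $\tilde{\omega}$ and the relation $\mu^2\lambda_k=\lambda_{k-1}$; once that identification is made, the proof reduces to the short algebraic manipulation above. If one wanted to avoid even this level of bookkeeping, an alternative would be to notice that $h_i$ is uniquely characterized up to a harmonic polynomial of degree $<\deg\widehat{f}_i$ by the conditions $\triangle h_i=0$ and $h_i\equiv \widehat{f}_i$ modulo polynomials depending only on the pullback direction $\phi_1$, and then invoke uniqueness; but the direct computation is simpler.
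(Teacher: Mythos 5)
Your proposal is correct and follows essentially the same route as the paper's own proof: both rely on Lemma~\ref{lambdas} (via Eq.~\eqref{laplacian-k}) to get $\widehat{f}_1\circ\phi=\lambda_{k-1}\widehat{f}_2$, quote the previously derived identity $\tilde{p}_1\circ\phi=\lambda_{k-1}\tilde{p}_2$, and conclude with the same one-line subtraction yielding $h_1\circ\phi=\lambda_{k-1}h_2$. Your additional remark that $\lambda_{k-1}\neq 0$ is a small but welcome bit of explicitness that the paper leaves implicit.
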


\begin{proof} Let $\phi$ satisfy $f_1\circ \phi=\lambda f_2$, for some nonzero constant $\lambda$, and let us see that $h_1\circ \phi=\lambda^{\star} \cdot h_2$ for some other constant $\lambda^{\star}$, possibly different from $\lambda$. In order to do this, since $\widehat{f}_1=\Delta^{k-1} f_1$ and $\widehat{f}_2=\Delta^{k-1}f_2$, from Eq. \eqref{laplacian-k} we deduce that $\widehat{f}_1\circ \phi=\lambda_{k-1} \cdot\widehat{f}_2$. Furthermore, we have seen that $\tilde{p}_1\circ \phi=\lambda_{k-1} \cdot \tilde{p}_2$. Thus,
\[
h_1\circ \phi=(\widehat{f}_1-\tilde{p}_1)\circ \phi=\widehat{f}_1\circ \phi-\tilde{p}_1\circ \phi=\lambda_{k-1}\cdot\widehat{f}_2-\lambda_{k-1}\cdot\tilde{p}_2=\lambda_{k-1} \cdot(\widehat{f}_2-\tilde{p}_2)=\lambda_{k-1} \cdot h_2
\]
Therefore $\lambda^{\star}=\lambda_{k-1}$, and the result follows.
\end{proof}

If the degrees of both $h_1,h_2$ are higher than 2, we are in the case discussed before. If the degrees of both $h_1,h_2$ are 1, we observe that $h_1$ does not depend only on $x$, and $h_2$ is not a function only of $ax+by+c$, i.e. $h_2$ is not the composition of a univariate polynomial with $ax+by+c$. Furthermore, the similarity $\phi$ transforms the curve defined by $g_1(x,y)=x\cdot h_1(x,y)$ into the curve defined by $g_1(x,y)=(ax+by+c)\cdot h_2(x,y)$, and therefore we are in the case of two conic curves (more precisely, two pairs of secant lines), which can be solved by elementary methods.  If the degrees of both $h_1,h_2$ have degree 2 and $h_1,h_2$ are not two circles, the problem can be solved by elementary methods. If $h_1,h_2$ are are two circles, computing the similarities transforming product of the line $x=0$ and the circle defined by $h_1(x,y)$ into the product of the line $ax+by+c=0$ and the circle defined by $h_2(x,y)$ is straightforward.

Summarizing, one can observe that in fact, for each polynomial $f_i$, with $i=1,2$, we are applying Algorithm 2 to replace $f_i$ by a harmonic polynomial $h_i$. When $h_i$ has degree 1 or 2, in turn we replace $h_i$ by either a pair of secant lines, or the product of a circle and a conic. The practical efficiency is very high, since deriving the $h_i$ is fast, and comparing the $h_i$ boils down to either applying Algorithm 5 (see Remark \ref{efficiency-5}), or applying elementary methods in the case when the $h_i$ are pairs of secant lines, or unions of a line and a circle.


\begin{example} Let
\[
f_1(x,y)=-\frac{1}{120}(y^2-1)^2+\frac{1}{120}x^2(x-1)(x-2)(x+5)-\frac{1}{120}x^4
\]
define the curve ${\mathcal C}_1$, called \emph{stirrup curve}(see \cite{W}). Applying the transformation
\[
\{x:=y,\mbox{ }y:=x,\}
\]
i.e. a rotation of $\frac{\pi}{2}$ radians about the origin, we get another curve ${\mathcal C}_2$, defined by the polynomial
\[
f_2(x,y)=-\frac{1}{120}(x^2-1)^2+\frac{1}{120}y^2(y-1)(y-2)(y+5)-\frac{1}{120}y^4
\]
In particular, calling $\phi(x,y)=(y,x)$, we have $f_1\circ \phi=f_2$. The curves ${\mathcal C}_1$ (in blue) and ${\mathcal C}_2$ (in red) are plotted in Figure \ref{fig:sim}.

\begin{figure}
\begin{center}
\includegraphics[scale=0.3]{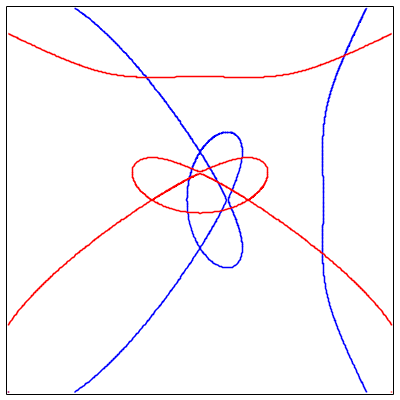}
\caption{Two similar curves.}\label{fig:sim}
\end{center}
\end{figure}

The laplacian chains starting with $f_1,f_2$ are
\[
\begin{array}{l}
  f_1\longmapsto\triangle f_1\longmapsto\triangle^2 f_1=x\\
	f_2\longmapsto\triangle f_2\longmapsto\triangle^2 f_2=y.
\end{array}
\]
Then we have $\widehat{f}_1=\triangle f_1=\frac{1}{6}x^3+\frac{1}{10}x^2-\frac{13}{20}x+\frac{1}{5}-\frac{1}{10}y^2$ and $p_1(x)=x$, so $\tilde{p}_1(x)=\frac{1}{3}x^6$. Therefore,
\[
\tilde{f}_1(x,y)=\widehat{f}_1(x,y)-\tilde{p}_1(x)=\frac{1}{10}x^2-\frac{13}{20}x+\frac{1}{5}-\frac{1}{10}y^2,
\]
which satisfies that $\Delta \tilde{f}_1=0$. One can easily see that $\tilde{f}_1(x,y)$ is a hyperbola centered at the point $\left(\frac{13}{4},0\right)$, where the major axis is parallel to the $x$-axis, and the minor axis is parallel to the $y$-axis. Analogously, $\widehat{f}_2(x,y)=-\frac{1}{10}x^2+\frac{1}{5}+\frac{1}{6}y^3+\frac{1}{10}y^2-\frac{13}{20}y$ and $\tilde{p}_2=\tilde{p}_2(y)=\frac{1}{6}y^3$, so
\[
\tilde{f}_2(x,y)=-\frac{1}{10}x^2+\frac{1}{5}+\frac{1}{10}y^2-\frac{13}{20}y,
\]
which is also harmonic, and defines a hyperbola centered at the point $\left(0,\frac{13}{4}\right)$, where the major axis is parallel to the $y$-axis, and the minor axis is parallel to the $x$-axis. One can readily see that calling $\phi(x,y)=(y,x)$, we have $\tilde{f}_1\circ \phi=\tilde{f}_2$. Notice that the hyperbolas defined by $\tilde{f}_1$ and $\tilde{f}_2$ are also related by the similarity $\phi_1(x,y)=(y,-x)$, which also maps ${\mathcal C}_1$ onto ${\mathcal C}_2$.
\end{example}

\vspace{0.3 cm}
\noindent \fbox{$\deg\triangle^\ell f_1, \triangle^\ell f_2=2$.} in this case, $g_1=\triangle^{\ell}f_1(x,y)=0$ and $g_2=\triangle^{\ell}f_2(x,y)=0$ are conic sections. If they are pairs of parallel lines we can reduce the problem to the previous case, and if they are not circles we can check whether or not they are similar by elementary methods. If we have two circles defined by $g_1,g_2$, centered at points $P_1,P_2$ then we proceed as in the case $\deg\triangle^\ell f=2$ addressed in Subsection \ref{red-symmetries} for the computation of symmetries. More precisely, for each $g_i$ we: (1) apply a translation $T_{{\bf v}_i}$ so that $P_i$ is moved to the origin; (2) $h_i=\widehat{f}_i-\tilde{\omega}_i$ where $\widehat{f}_i$ is the last element in the laplacian chain starting with $f_i$ that is not a product of concentric circles centered at $P_i$; (3) $\tilde{\omega}_i=\tilde{p}_i\circ T_{-{\bf v}_i}$, where $\tilde{p}_i$ corresponds to the polynomial $\tilde{p}$ we constructed in the symmetries case, appearing in Proposition \ref{prop-r}. Reasoning in an analogous way to the case before, one can check that the polynomials $h_i$ are harmonic, and that $\simm(f_1,f_2)\subset \simm(\tilde{f}_1,\tilde{f}_2)$. Summarizing, either $g_1,g_2$ are nondegenerate conics, in which case comparing whether or not they are similar can be done by elementary Linear Algebra methods, or we find two harmonic polynomials $h_1,h_2$ that we compare with Algorithm 5.

The whole procedure to compute the similarities is given in Algorithm 6.

\begin{algorithm}[t!]
\begin{algorithmic}[1]
\REQUIRE Two square-free polynomials $f_1(x,y),f_2(x,y)$ of the same degree $n$.
\ENSURE The similarities between the curves $\av{C}_1,\av{C}_2$ defined by $f_1,f_2$.
\FOR{i=1,2}
\FOR{$s=1$ to $\lceil \frac{n}{2} \rceil $}
\STATE{compute $\Delta^s f_i$}
\IF{$\deg(\Delta^s f_i)=0$}
\STATE{$\ell_i:=s-1$; {\bf break}}
\ENDIF
\ENDFOR
\ENDFOR
\IF{$\ell_1\neq \ell_2$}
\STATE{{\bf return} {\tt No similarities found}}
\ENDIF
\IF{$\deg(\Delta^\ell f_i)>2$}
\STATE{$h_i=\Delta^{\ell_i} f_i$, $i=1,2$}
\ENDIF
\IF{$\deg(\Delta^\ell f_i)=1$}
\STATE{find the polynomials $h_1,h_2$ using Algorithm 2}
\ENDIF
\IF{$\deg(\Delta^\ell f_i)=2$}
\STATE{find the polynomials $h_1,h_2$ using Algorithm 3}
\ENDIF
\IF{$\deg(h_i)=1$, $i=1,2$ or $\deg(h_i)=2$, $i=1,2$ and the $h_i$s are circles}
\STATE{replace $h_i$ by a pair of secant lines, if $\deg(h_i)=1$ for $i=1,2$, and by the product of $h_i$ times a secant line, if $h_i$, for $i=1,2$, is a circle.}
\ENDIF
\STATE{compute the similarities between $h_1,h_2$ using either Algorithm 5, or elementary methods.}
\FOR{each similarity$\phi$ found in the previous step}
\STATE{check whether $f_1\circ \phi=\lambda f_2$}
\ENDFOR
\STATE{{\bf return} the similarities of $f$, or the message {\tt No similarities found}.}
\end{algorithmic}
\caption{{\tt Similarities General}}\label{sim-case-general}
\end{algorithm}

\begin{remark}\label{rem-efficiency-again}
In order to give an idea of the efficiency of the method, we picked, in the computer algebra system {\tt Maple} 18, a random, dense, polynomial $f_1(x,y)$ of degree 30, with coefficients of order up to $2^6$, and applied the similarity $\phi(x,y)=(x-2y,y=2x+y)$ to obtain another polynomial $f_2(x,y)$, also dense, of degree 30 and coefficients of order up to $2^{50}$. The double laplacian chain yields two ellipses, and we test whether or not these ellipses are similar using the Maple package {\tt geometry}, getting an affirmative response. The whole process takes 0.702 seconds in the personal laptop mentioned in Remark \ref{efficiency-5}.
\end{remark}

\section{Conclusion and Further Research}\label{sec-conclusion}

In this paper we have presented a novel, efficient method to compute the symmetries of a planar algebraic curve, and the similarities, if any, between two algebraic curves. The method is based on the reduction of the problem to the analogous problem on harmonic polynomials, taking advantage of the fact that the laplacian operator commutes with orthogonal transformations. It is natural to wonder if the method can be extended to algebraic surfaces. Certainly, taking laplacians we can reduce the problem of computing the symmetries of an algebraic surface (similarities are analogous) to the problem of computing either the symmetries of an algebraic surface defined by a harmonic polynomial in the variables $x,y,z$, or the symmetries of a quadric, or the symmetries of a plane. If we reach a quadric which is not a surface of revolution, then we can certainly solve the problem. But in the other cases, the strategy is not clear, and deserves further research. These are questions that we would like to investigate in the future.

\vspace{0.2 cm}
\noindent{\bf Acknowledgements.} Juan G. Alc\'azar is supported by the Spanish Ministerio de Econom\'{\i}a y Competitividad and by the European Regional Development Fund (ERDF), under the project  MTM2014-54141-P, and is a member of the Research Group {\sc asynacs} (Ref. {\sc ccee2011/r34}). M. L\'{a}vi\v{c}ka and J. Vr\v{s}ek are  supported by the project LO1506 of the Czech Ministry of Education, Youth and Sports.
Additionally, in order to develop the results of the paper, Juan~G.~Alc\'azar was also partially supported by the project LO1506 during his stay in Plze\v{n}, and Jan Vr\v{s}ek was also partially supported by a mobility grant from the Universidad de Alcal\'a.

\end{document}